\newtheorem{theorem}{Theorem}[section]
\newtheorem{proposition}[theorem]{Proposition}
\newtheorem{corollary}[theorem]{Corollary}
\newtheorem{lemma}[theorem]{Lemma}
\newtheorem{lemma-definition}[theorem]{Lemma-Definition}
\theoremstyle{definition}
\renewcommand{\frak}{\mathfrak}
\newcommand{\C}{{\mathbb C}}
\newcommand{\R}{{\mathbb R}}
\newcommand{\Z}{{\mathbb Z}}
\newcommand{\A}{\mathcal{A}}
\newcommand{\op}{\operatorname}
\newcommand{\Ker}{\op{Ker}}
\newcommand{\tensor}{\otimes}
\newcommand{\vol}{\op{vol}(Y,\lambda)}
\newcommand{\E}{\mathcal{E}}
\newcommand{\F}{\mathcal{F}}
\renewcommand{\epsilon}{\varepsilon}
\newcommand{\mc}[1]{\mathcal{#1}}
\title{The asymptotics of ECH capacities}
\author{Daniel Cristofaro-Gardiner, Michael Hutchings,\\ and Vinicius Gripp Barros Ramos\footnote{The first author was partially supported by NSF grant DMS-0838703. The second and third authors were partially supported by NSF grant DMS-1105820.}}
\date{}
\begin{document}

\maketitle

\begin{abstract}
In a previous paper, the second author used embedded contact homology (ECH) of contact three-manifolds to define ``ECH capacities'' of four-dimensional symplectic manifolds. In the present paper we prove that for a four-dimensional Liouville domain with all ECH capacities finite, the asymptotics of the ECH capacities recover the symplectic volume. This follows from a more general theorem relating the volume of a contact three-manifold to the asymptotics of the amount of symplectic action needed to represent certain classes in ECH. The latter theorem was used by the first and second authors to show that every contact form on a closed three-manifold has at least two embedded Reeb orbits.
\end{abstract}

\section{Introduction}

Define a four-dimensional {\em Liouville domain\/}\footnote{This definition of ``Liouville domain'' is slightly weaker than the usual definition, which would require that $\omega$ have a primitive $\lambda$ on $X$ which restricts to a contact form on $Y$.} to be a compact symplectic four-manifold $(X,\omega)$ with oriented boundary $Y$ such that $\omega$ is exact on $X$, and there exists a contact form $\lambda$ on $Y$ with $d\lambda=\omega|_Y$. In \cite{qech}, a sequence of real numbers
\[
0=c_0(X,\omega) < c_1(X,\omega) \le c_2(X,\omega) \le \cdots \le \infty
\]
called {\em ECH capacities\/} was defined. The definition is reviewed below in \S\ref{sec:capacities}. The ECH capacities obstruct symplectic embeddings: If $(X,\omega)$ symplectically embeds into $(X',\omega')$, then
\begin{equation}
\label{eqn:volumeconstraint}
c_k(X,\omega)\le c_k(X',\omega')
\end{equation}
for all $k$. For example, a theorem of McDuff \cite{mcd}, see also the survey \cite{pnas}, shows that ECH capacities give a sharp obstruction to symplectically embedding one four-dimensional ellipsoid into another.

The first goal of this paper is to prove the following theorem, relating the asymptotics of the ECH capacities to volume. 
This result was conjectured in \cite{qech} based on experimental evidence; it was proved in \cite[\S8]{qech} for star-shaped domains in $\R^4$ and some other examples.

\begin{theorem}
\label{thm:vol4}
\cite[Conj. 1.12]{qech}
Let $(X,\omega)$ be a four-dimensional Liouville domain such that $c_k(X,\omega)<\infty$ for all $k$. Then
\[
\lim_{k\to\infty}\frac{c_k(X,\omega)^2}{k} = 4\op{vol}(X,\omega).
\]
\end{theorem}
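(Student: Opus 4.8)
The strategy is to reduce the theorem about ECH capacities of the Liouville domain $(X,\omega)$ to a statement purely about the contact three-manifold $(Y,\lambda)$ on the boundary, namely a statement relating the symplectic volume to the asymptotics of the ECH spectrum (the minimal symplectic action needed to represent degree-$k$ classes in embedded contact homology). Concretely, recall from \cite{qech} that the ECH capacities $c_k(X,\omega)$ are defined via the ECH spectrum of $(Y,\lambda)$ together with the $U$-map and the filtered ECH of the completion; with all $c_k$ finite, one expects $c_k(X,\omega)$ to agree asymptotically with $c_k(Y,\lambda)$, the $k$-th element of the ECH spectrum of the contact boundary. So the first step is to formulate and prove the key auxiliary result:

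\begin{equation*}
\lim_{k\to\infty}\frac{c_k(Y,\lambda)^2}{k}=2\op{vol}(Y,\lambda),
\end{equation*}

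where $\op{vol}(Y,\lambda)=\int_Y\lambda\wedge d\lambda$, and then relate $\op{vol}(Y,\lambda)$ to $\op{vol}(X,\omega)$ by Stokes' theorem (using $\omega=d\lambda$ near $Y$ and exactness of $\omega$ on $X$) to get the factor-of-$2$ discrepancy that produces the $4\op{vol}(X,\omega)$ in the statement.

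**Main steps for the contact volume asymptotics.** To prove the displayed asymptotic formula for $(Y,\lambda)$, I would argue as follows. For the upper bound on $\limsup c_k^2/k$, I would exhibit, for each large action level $L$, enough linearly independent classes in the filtered ECH $ECH^L(Y,\lambda)$: the dimension of $ECH^L$ grows like $\tfrac{1}{2}L^2/\op{vol}(Y,\lambda)\cdot(\text{const})$ by a Weyl-law-type count of ECH generators (finite collections of Reeb orbits with total action $\le L$), which one controls by comparing to the case of a perturbed Morse–Bott contact form or by a direct volume estimate; combined with the fact that the $U$-map decreases grading by $1$ and (by a theorem of Taubes, via the isomorphism with Seiberg–Witten Floer cohomology) acts in a way that lets one find long $U$-chains, this forces $c_k(Y,\lambda)\le (1+o(1))\sqrt{2\op{vol}(Y,\lambda)\,k}$. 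For the lower bound, the crucial input is the relationship between ECH and Seiberg–Witten theory together with Taubes' asymptotics: one shows that any ECH class of degree $k$ must have action at least $(1-o(1))\sqrt{2\op{vol}(Y,\lambda)\,k}$, because a collection of Reeb orbits carrying such a class, viewed through the Seiberg–Witten correspondence, has an energy/action that is bounded below by the volume via the Weyl law for the relevant Dirac operator's spectrum. In both directions the combinatorics can be checked directly when $\lambda$ is nondegenerate and the Reeb flow is sufficiently understood (e.g., for the standard models), and the general case follows by a limiting argument using that both $c_k$ and $\op{vol}$ are continuous (in an appropriate sense) under $C^0$-perturbations of the contact form.

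**Passing back to $(X,\omega)$.** Given the contact-side asymptotics, I would prove Theorem~\ref{thm:vol4} by squeezing $c_k(X,\omega)$ between $c_k$ of the boundary contact manifold and a slightly rescaled version. The embedding monotonicity \eqref{eqn:volumeconstraint} gives one inequality: since a collar neighborhood of $Y$ in $X$ contains, and is contained in, pieces of the symplectization $(\,[T,T']\times Y,\, d(e^s\lambda)\,)$, one compares $c_k(X,\omega)$ with $c_k$ of truncated symplectizations, whose ECH capacities are computed directly from the ECH spectrum of $(Y,\lambda)$. The finiteness hypothesis $c_k(X,\omega)<\infty$ is exactly what guarantees that no ``escape to infinity'' occurs and that the relevant ECH classes in the completion $\widehat X$ are represented with action controlled by the boundary spectrum; here the definition of ECH capacities via cobordism maps and the $U$-map from \cite{qech} is used to transfer the asymptotic count. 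Taking $k\to\infty$ and combining with $\op{vol}(Y,\lambda)=2\op{vol}(X,\omega)$ (the factor $2$ coming from $\int_Y\lambda\wedge d\lambda$ versus $\int_X\omega\wedge\omega$ via Stokes) yields the claimed limit $4\op{vol}(X,\omega)$.

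**Expected main obstacle.** The hardest part is the lower bound for $c_k(Y,\lambda)$: proving that ECH classes of degree $k$ genuinely require action $\gtrsim\sqrt{2\op{vol}(Y,\lambda)\,k}$. The elementary combinatorial counting of Reeb-orbit generators only bounds the \emph{number} of generators below a given action, not the minimal action within a fixed nonzero ECH homology class, because huge cancellation in the ECH differential could in principle kill all low-action generators in a given degree. Ruling this out seems to require the full strength of Taubes' isomorphism between ECH and Seiberg–Witten Floer cohomology, together with the asymptotic analysis of solutions to the Seiberg–Witten equations on $Y$ (the spectral flow / eigenvalue asymptotics of the associated family of Dirac operators), which is what ultimately pins the action of an ECH class to the volume. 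Making this quantitative, and uniform enough to survive the perturbation-and-limit argument for degenerate $\lambda$, is the technical heart of the paper.
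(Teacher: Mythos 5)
Your reduction to the boundary is right in outline but more roundabout than necessary: in \cite{qech} the capacities are \emph{defined} by $c_k(X,\omega)=c_k(Y,\lambda)$, so no squeezing between truncated symplectizations or cobordism maps on a completion is needed; the only content of that reduction is the Stokes computation $\op{vol}(Y,\lambda)=2\op{vol}(X,\omega)$, which you have. The genuine gap is that neither of your two proposed core arguments for the asymptotics of $c_k(Y,\lambda)$ works as stated. For the upper bound you propose a ``Weyl-law-type count of ECH generators'' of action at most $L$; but for a general contact form there is no a priori estimate relating the number of Reeb orbit sets of action $\le L$ to the contact volume --- that asymptotic relation is essentially equivalent to the theorem being proved, so the step is circular, and in any case a count of generators bounds the rank of the filtered complex rather than producing classes that survive to a prescribed nonzero homology class. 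The paper's upper bound instead works entirely on the Seiberg--Witten side: for a class of grading $j$ it takes a max-min family of solutions $(A_{\hat\sigma}(r),\psi_{\hat\sigma}(r))$ of the $r$-perturbed equations, evaluates Taubes's a priori energy bound $\E(A)\le\frac r2\op{vol}(Y,\lambda)+C$ at the special value $r_j$ of \eqref{eqn:rj} singled out by the spectral-flow/Chern--Simons estimate of Proposition~\ref{prop:sfcs}, and then controls the drift of the energy as $r\to\infty$, where by Proposition~\ref{prop:energyaction} it converges to $2\pi c_\sigma(Y,\lambda)$.

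For the lower bound, your appeal to ``the Weyl law for the relevant Dirac operator's spectrum'' is not an argument: you correctly identify the danger (cancellation in the differential could represent a class with unexpectedly small action), but you supply no mechanism that rules it out, and the spectral-flow analysis you gesture at is what the paper uses for the \emph{upper} bound, not the lower one. The paper's lower bound is entirely different and more geometric: pack $[-a,0]\times Y$ by disjoint symplectic balls $B(r_i)$ filling all but $\epsilon$ of the volume, apply the ECH cobordism map of the complement together with its monotonicity \eqref{eqn:monotonicity}, composition, disjoint-union, and $U$-intertwining properties to compute it explicitly (Lemma~\ref{lem:Phi}), and then read off the bound from the known ECH spectrum of $\partial B(r)$, for which the asymptotics are an elementary lattice-point count. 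You would need to supply one of these two mechanisms (or a genuine substitute) in each direction; as written, both halves of your argument are missing their essential step.
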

\noindent
Here the symplectic volume is defined by
\[
\op{vol}(X,\omega) = \frac{1}{2}\int_X\omega\wedge\omega.
\]

In particular, when all ECH capacities are finite, the embedding obstruction \eqref{eqn:volumeconstraint} for large $k$ recovers the obvious volume constraint $\op{vol}(X,\omega) \le \op{vol}(X',\omega')$. As we review below, the hypothesis that $c_k(X,\omega)<\infty$ for all $k$ is a purely topological condition on the contact structure on the boundary; for example it holds whenever $\partial X$ is diffeomorphic to $S^3$.

We will obtain Theorem~\ref{thm:vol4} as a corollary of the more general Theorem~\ref{thm:main} below, which also has applications to refinements of the Weinstein conjecture in Corollary~\ref{cor:two}. To state Theorem~\ref{thm:main}, we first need to review some notions from embedded contact homology (ECH).
More details about ECH may be found in \cite{bn} and the references therein.

\subsection{Embedded contact homology}
\label{sec:ECH}

Let $Y$ be a closed oriented three-manifold and let $\lambda$ be a contact form on $Y$, meaning that $\lambda\wedge d\lambda>0$. The contact form $\lambda$ determines a contact structure $\xi=\Ker(\lambda)$, and the Reeb vector field $R$ characterized by $d\lambda(R,\cdot)=0$ and $\lambda(R)=1$. Assume that $\lambda$ is nondegenerate, meaning that all Reeb orbits are nondegenerate. Fix $\Gamma\in H_1(Y)$.  The {\em embedded contact homology\/} $ECH(Y,\xi,\Gamma)$ is the homology of a chain complex $ECC(Y,\lambda,\Gamma,J)$ over $\Z/2$ defined as follows.

A generator of the chain complex is a finite set of pairs
$\alpha=\{(\alpha_i,m_i)\}$ where the $\alpha_i$ are distinct embedded
Reeb orbits, the $m_i$ are positive integers, $m_i=1$ whenever
$\alpha_i$ is hyperbolic, and the total homology class
$\sum_im_i[\alpha_i]=\Gamma\in H_1(Y)$. To define the chain complex
differential $\partial$ one chooses a generic almost complex structure
$J$ on $\R\times Y$ such that $J(\partial_s)=R$ where $s$ denotes the
$\R$ coordinate, $J(\xi)=\xi$ with $d\lambda(v,Jv)\ge 0$ for
$v\in\xi$, and $J$ is $\R$-invariant. Given another chain complex
generator $\beta=\{(\beta_j,n_j)\}$, the differential coefficient
$\langle\partial\alpha,\beta\rangle\in\Z/2$ is a mod 2 count of
$J$-holomorphic curves in $\R\times Y$ that converge as currents to
$\sum_im_i\alpha_i$ as $s\to+\infty$ and to $\sum_jn_j\beta_j$ as
$s\to-\infty$, and that have ``ECH index'' equal to $1$. The
definition of the ECH index is explained in \cite{ir}; all we need to
know here is that the ECH index defines a relative $\Z/d$-grading on the chain
complex, where $d$ denotes the divisibility of
$c_1(\xi)+2\op{PD}(\Gamma)$ in $H^2(Y;\Z)$ mod torsion.  It is shown
in \cite[\S7]{obg1} that $\partial^2=0$.

One now defines $ECH(Y,\lambda,\Gamma,J)$ to be the homology of the chain complex $ECC(Y,\lambda,\Gamma,J)$. Taubes \cite{e1} proved that if $Y$ is connected, then there is a canonical isomorphism of relatively graded $\Z/2$-modules
\begin{equation}
\label{eqn:taubes}
ECH_*(Y,\lambda,\Gamma,J) = \widehat{HM}^{-*}(Y,\frak{s}_\xi+\op{PD}(\Gamma)).
\end{equation}
Here $\widehat{HM}^{-*}$ denotes the `from' version of Seiberg-Witten Floer cohomology as defined by Kronheimer-Mrowka \cite{km}, with $\Z/2$ coefficients\footnote{One can define ECH with integer coefficients \cite[\S9]{obg2}, and the isomorphism \eqref{eqn:taubes} also exists over $\Z$, as shown in \cite{e3}. However $\Z/2$ coefficients will suffice for this paper.}, and with the sign of the relative grading reversed. Also, $\frak{s}_\xi$ denotes the spin-c structure determined by the oriented 2-plane field $\xi$, see e.g. \cite[Ex. 8.2]{tw}. It follows that, whether or not $Y$ is connected, $ECH(Y,\lambda,\Gamma,J)$ depends only on $Y$, $\xi$, and $\Gamma$, and so can be denoted by $ECH_*(Y,\xi,\Gamma)$.

There is a degree $-2$ map
\begin{equation}
\label{eqn:Umap}
U:ECH_*(Y,\xi,\Gamma) \longrightarrow ECH_{*-2}(Y,\xi,\Gamma).
\end{equation}
This map on homology is induced by a chain map which counts
$J$-holomorphic curves with ECH index $2$ that pass through a base
point in $\R\times Y$. When $Y$ is connected, the $U$ map
\eqref{eqn:Umap} does not depend on the choice of base point, and
agrees under Taubes's isomorphism \eqref{eqn:taubes} with an analogous
map on Seiberg-Witten Floer cohomology \cite{e5}. If $Y$ is
disconnected, then there is one $U$ map for each component of $Y$.

Although ECH is a topological invariant by \eqref{eqn:taubes}, it contains a distinguished class which can distinguish some contact structures. Namely, the empty set of Reeb orbits is a generator of $ECC(Y,\lambda,0,J)$; it is a cycle by the conditions on $J$, and so it defines a distinguished class
\begin{equation}
\label{eqn:ci}
[\emptyset]\in ECH(Y,\xi,0),
\end{equation}
called the {\em ECH contact invariant\/}.
Under the isomorphism \eqref{eqn:taubes}, the ECH contact invariant agrees with an analogous contact invariant in Seiberg-Witten Floer cohomology \cite{e5}.

There is also a ``filtered'' version of ECH, which is sensitive to the contact form and not just the contact structure. If $\alpha=\{(\alpha_i,m_i)\}$ is a generator of the chain complex $ECC(Y,\lambda,\Gamma,J)$, its {\em symplectic action\/} is defined by
\begin{equation}
\label{eqn:symplecticaction}
\mc{A}(\alpha) = \sum_im_i\int_{\alpha_i}\lambda.
\end{equation}
It follows from the conditions on the almost complex structure $J$ that if the differential coefficient $\langle\partial\alpha,\beta\rangle\neq 0$ then $\mc{A}(\alpha)>\mc{A}(\beta)$. Consequently, for each $L\in\R$, the span of those generators $\alpha$ with $\mc{A}(\alpha)<L$ is a subcomplex, which is denoted by $ECC^L(Y,\lambda,\Gamma,J)$. The homology of this subcomplex is the {\em filtered ECH\/} which is denoted by $ECH^L(Y,\lambda,\Gamma)$.
Inclusion of chain complexes induces a map
\begin{equation}
\label{eqn:iim}
ECH^L(Y,\lambda,\Gamma)\longrightarrow ECH(Y,\xi,\Gamma).
\end{equation}
It is shown in \cite[Thm.\ 1.3]{cc2} that $ECH^L(Y,\lambda,\Gamma)$ and
the map \eqref{eqn:iim} do not depend on the almost complex structure $J$.

A useful way to extract invariants of the contact form out of filtered ECH is as follows. Given a nonzero class $\sigma\in ECH(Y,\xi,\Gamma)$, define
\[
c_\sigma(Y,\lambda)\in\R
\]
to be the infimum over $L$ such that the class $\sigma$ is in the
image of the inclusion-induced map \eqref{eqn:iim}.
So far we have
been assuming that the contact form $\lambda$ is nondegenerate. If
$\lambda$ is degenerate, one defines
$c_\sigma(Y,\lambda)=\lim_{n\to\infty}c_\sigma(Y,\lambda_n)$, where
$\{\lambda_n\}$ is a sequence of nondegenerate contact forms which
$C^0$-converges to $\lambda$, cf.\ \cite[\S3.1]{qech}.

\subsection{ECH capacities}
\label{sec:capacities}

Let $(Y,\lambda)$ be a closed contact three-manifold and assume that the ECH contact invariant \eqref{eqn:ci} is nonzero. Given a nonnegative integer $k$, define $c_k(Y,\lambda)$ to be the minimum of $c_\sigma(Y,\lambda)$, where $\sigma$ ranges over classes in $ECH(Y,\xi,0)$ such that $A\sigma=[\emptyset]$ whenever $A$ is a composition of $k$ of the $U$ maps associated to the components of $Y$. If no such class $\sigma$ exists, define $c_k(Y,\lambda)=\infty$. The sequence $\{c_k(Y,\lambda)\}_{k=0,1,\ldots}$ is called the {\em ECH spectrum\/} of $(Y,\lambda)$.

Now let $(X,\omega)$ be a Liouville domain with boundary $Y$ and let $\lambda$ be a contact form on $Y$ with $d\lambda=\omega|_Y$. One then defines the ECH capacities of $(X,\omega)$ in terms of the ECH spectrum of $(Y,\lambda)$ by
\[
c_k(X,\omega) = c_k(Y,\lambda).
\]
This definition is valid because the ECH contact invariant of $(Y,\lambda)$ is nonzero by \cite[Thm.\ 1.9]{cc2}. It follows from \cite[Lem. 3.9]{qech} that $c_k(X,\omega)$ does not depend on the choice of contact form $\lambda$ on $Y$ with $d\lambda=\omega|_Y$.

Note the volume of a Liouville domain as above satisfies
\begin{equation}
\label{eqn:liouvillevolume}
\op{vol}(X,\omega) = \frac{1}{2}\op{vol}(Y,\lambda),
\end{equation}
where the volume of a contact three-manifold is defined by
\begin{equation}
\label{eqn:contactvolume}
\op{vol}(Y,\lambda) = \int_Y \lambda\wedge d\lambda.
\end{equation}
To prove \eqref{eqn:liouvillevolume}, let $\lambda'$ be a primitive of $\omega$ on $X$, and use Stokes's theorem on $X$ and then again on $Y$ to obtain
\[
\int_X\omega\wedge\omega = \int_Y \lambda'\wedge \omega = \int_Y \omega\wedge \lambda = \int_Y d\lambda\wedge \lambda.
\]

By equation \eqref{eqn:liouvillevolume}, Theorem~\ref{thm:vol4} is now a consequence of the following result about the ECH spectrum:

\begin{theorem}
\label{thm:vol2}
\cite[Conj.\ 8.1]{qech}
Let $(Y,\lambda)$ be a closed contact three-manifold with nonzero ECH contact invariant. If $c_k(Y,\lambda)<\infty$ for all $k$, then
\[
\lim_{k\to\infty}\frac{c_k(Y,\lambda)^2}{k} = 2\op{vol}(Y,\lambda).
\]
\end{theorem}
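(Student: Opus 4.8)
\emph{Reformulation and reductions.} The theorem is, in essence, a statement about the growth of ECH spectral invariants along the $U$-module structure. Since $[\emptyset]\in ECH(Y,\xi,0)$ is nonzero and $c_k<\infty$ for all $k$ --- the latter being exactly the assertion that $[\emptyset]$ has, for each $k$, a preimage under $U^{k}$ --- and since any such preimage has ECH-grading $2k$ above $[\emptyset]$, the quantity $c_k$ is the minimal symplectic action $c_\sigma(Y,\lambda)$ over preimages $\sigma$, and $k\mapsto c_k$ is nondecreasing because the chain-level $U$ map does not raise action. What must be shown is that this minimal action grows like $\sqrt{2\op{vol}(Y,\lambda)\,k}$. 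The plan is to deduce this from Taubes's isomorphism $ECH_*(Y,\xi,\Gamma)=\widehat{HM}^{-*}(Y,\frak{s}_\xi+\op{PD}(\Gamma))$ together with the large-$r$ analysis of the $r$-perturbed Seiberg-Witten equations, in an argument in which $r$ and $k$ are sent to infinity together at a carefully matched rate. Two preliminary reductions: one may assume $\lambda$ nondegenerate, since in general $c_k(Y,\lambda)=\lim_n c_k(Y,\lambda_n)$ along $C^0$-approximating nondegenerate forms and, by the monotonicity and $C^0$-continuity of ECH spectral invariants, this convergence is uniform enough in $k$ to interchange with $k\to\infty$; and one may assume $Y$ connected, the general case following from the behavior of $ECH$, of the $U$ maps, of $\op{vol}$, and of symplectic action under disjoint unions.

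\emph{The Seiberg-Witten inputs.} So fix $Y$ connected and $\lambda$ nondegenerate, and pass to monopole Floer cohomology via Taubes's isomorphism. Two facts about the $r$-perturbed equations as $r\to\infty$, both resting ultimately on Taubes's work around the Weinstein conjecture, are what we need. First, a filtered correspondence: for $r$ large in terms of $L$, filtered Seiberg-Witten Floer cohomology at parameter $r$ detects exactly the ECH classes of action at most $L$, so that the Seiberg-Witten spectral invariant of a class $\sigma$ at parameter $r$, divided by $2\pi$, approximates $c_\sigma(Y,\lambda)$ once $r$ exceeds a quantity that grows only slightly faster than linearly in $c_\sigma(Y,\lambda)$. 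Second, a Weyl law for the grading: the grading, relative to a fixed reference, of the Seiberg-Witten solution attached to an ECH generator equals a universal constant times $r^2\op{vol}(Y,\lambda)$, plus a term determined by the solution's energy $i\int_Y\lambda\wedge F_A$, plus an error of size $O(r^{31/16})$ --- in any case $o(r^2)$ --- uniformly over generators whose action lies in the relevant range.

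\emph{Combining the inputs.} At parameter $r$, compare the gradings of minimal-energy representatives of a preimage $\sigma$ of $[\emptyset]$ under $U^{k}$ and of $[\emptyset]$ itself. Their difference is the fixed integer $2k$; the universal $r^2\op{vol}(Y,\lambda)$ term is common to both and cancels; the energy of the $[\emptyset]$-representative is negligible. What survives is an identity relating $k$, $r$, and $c_\sigma(Y,\lambda)$, valid up to the error $O(r^{31/16})$ and the error in the filtered correspondence; optimizing over $r$ converts it into $2k=c_\sigma(Y,\lambda)^2/\op{vol}(Y,\lambda)\cdot(1+o(1))$. The delicacy is that $r$ must be sent to infinity with $k$ at a rate in the window $\sqrt k\lesssim r=o(k^{16/31})$: large enough that the filtered correspondence detects classes of the relevant action, small enough that the spectral-flow error is negligible against $2k$; since $1/2<16/31$ this window is nonempty. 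Running the estimate for $r$ inside it gives, in one direction, the existence for each $k$ of a preimage $\sigma$ with $c_\sigma(Y,\lambda)\le\sqrt{2\op{vol}(Y,\lambda)\,k}\,(1+o(1))$, and in the other, the lower bound $c_\sigma(Y,\lambda)\ge\sqrt{2\op{vol}(Y,\lambda)\,k}\,(1-o(1))$ for every preimage $\sigma$; together these yield $c_k^2/k\to 2\op{vol}(Y,\lambda)$.

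\emph{Main obstacle.} Essentially all the work is in making the two Seiberg-Witten inputs effective. The error terms in Taubes's large-$r$ asymptotics outpace the signal one wants to extract, so the whole argument hinges on the numerical fact that the window $\sqrt k\lesssim r=o(k^{16/31})$ is nonempty; securing that requires the spectral-flow Weyl law with the stated uniform error exponent and the filtered ECH/Seiberg-Witten correspondence with a threshold near-linear in the action, and both of these rest on the detailed local analysis of Seiberg-Witten solutions concentrating near Reeb orbits together with sharp Chern-Simons and eta-invariant estimates. None of this is visible from the formal structure of ECH, and it is where the genuine difficulty lies.
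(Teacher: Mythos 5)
Your reduction to connected $Y$, nondegenerate $\lambda$, and classes in the $U$-tower of grading $2k$ above $[\emptyset]$ matches the paper, and your upper-bound heuristic (send $r$ and $k$ to infinity together, with $r\sim\sqrt{k}$ sitting just inside the window where the spectral-flow error $O(r^{31/16})$ is $o(k)$) is the right flavor. But the central input you invoke is not a true statement. The grading of the Seiberg--Witten solution attached to an ECH generator is \emph{independent of $r$} --- it is the Floer grading, equal to $-I(\sigma)$ under Taubes's isomorphism --- and it is not of the form $(\mathrm{const})\,r^2\op{vol}(Y,\lambda)$ plus an energy-determined term plus $O(r^{31/16})$. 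What Taubes actually proves is $|gr(A,\psi)+\tfrac{1}{4\pi^2}cs(A)|<Kr^{31/16}$, and the $r^2\op{vol}$ term lives only in the Chern--Simons functional of the \emph{reducible} solutions; for irreducibles of bounded energy one only has the non-sharp bound $|cs(A)|\le cr^{2/3}\E(A)^{4/3}$. Consequently your ``cancellation of the common $r^2\op{vol}$ term'' between representatives of $\sigma$ and of $[\emptyset]$ has no counterpart, and there is no identity relating $2k$, $r$, and $c_\sigma$ of the kind you optimize over. The actual mechanism for the upper bound is: reducibles occupy gradings below roughly $-\tfrac{1}{16\pi^2}r^2\op{vol}(Y,\lambda)$, so a class of grading $-j$ is forced to be represented by irreducibles only once $r$ exceeds $r_j\sim 4\pi\sqrt{j/\op{vol}(Y,\lambda)}$; at $r=r_j$ the a priori energy bound $\E\le\tfrac{r}{2}\op{vol}(Y,\lambda)+C$ gives $\E(r_j)\lesssim 2\pi\sqrt{j\op{vol}(Y,\lambda)}$; and one then controls the drift of the max-min energy from $r_j$ to $\infty$ (where it converges to $2\pi c_\sigma$) via $\tfrac{d}{dr}\F_{\hat\sigma}=-\tfrac12\E_{\hat\sigma}$ and the Chern--Simons estimates. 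Your proposal omits the reducibles entirely, which is where all the quantitative content of the upper bound comes from.

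The more serious gap is the lower bound. You assert that the same grading/energy comparison yields $c_\sigma\ge\sqrt{2\op{vol}(Y,\lambda)\,k}\,(1-o(1))$ for every preimage $\sigma$, but the Seiberg--Witten estimates available run the wrong way for this: $|cs|\le cr^{2/3}\E^{4/3}$ has a non-sharp constant and only bounds $|gr|$ from above by a multiple of $r^{2/3}\E^{4/3}$, which at $r\sim\sqrt{k}$ gives at best $c_\sigma\ge(\mathrm{const})\sqrt{k}$ with the wrong constant. The paper proves the lower bound by an entirely different argument: it packs $([-a,0]\times Y,d(e^s\lambda))$ by disjoint Darboux balls $B(r_i)$ filling all but $\epsilon$ of the volume, uses the ECH cobordism map of the complement together with its $U$-equivariance to compute $\Phi(\sigma)=\sum_k\sum_{k_1+\cdots+k_N=k}U_0^k\sigma\otimes\zeta_{k_1}\otimes\cdots\otimes\zeta_{k_N}$, and then invokes the monotonicity $c_{\sigma}\ge c_{\Phi(\sigma)}$ together with the explicit asymptotics $c_{\zeta_k}(\partial B(r))^2/k\to 2r^2$ for the ball. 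Without some substitute for this ball-packing step (or a genuinely sharp two-sided Weyl law for the irreducible spectrum, which is not in the literature), your argument establishes at most the upper-bound half of the theorem.
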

\noindent

Note that the hypothesis $c_k(Y,\lambda)<\infty$ just means that the ECH contact invariant is in the image of all powers of the $U$ map when $Y$ is connected, or all compositions of powers of the $U$ maps when $Y$ is disconnected. The comparison with Seiberg-Witten theory implies that this is possible only if  $c_1(\xi)\in H^2(Y;\Z)$ is torsion; see \cite[Rem.\ 4.4(b)]{qech}.

By \cite[Prop.\ 8.4]{qech}, to prove Theorem~\ref{thm:vol2} it suffices to consider the case when $Y$ is connected. Theorem~\ref{thm:vol2} in this case follows from our main theorem which we now state.

\subsection{The main theorem}

Recall from \S\ref{sec:ECH} that if $c_1(\xi)+2\op{PD}(\Gamma)\in H^2(Y;\Z)$ is torsion, then $ECH(Y,\xi,\Gamma)$ has a relative $\Z$-grading, and we can arbitrarily refine this to an absolute $\Z$-grading. The main theorem is now:

\begin{theorem}
\label{thm:main}
\cite[Conj. 8.7]{qech}
Let $Y$ be a closed connected contact three-manifold with a contact form $\lambda$ and let $\Gamma\in H_1(Y)$. Suppose that $c_1(\xi)+2\op{PD}(\Gamma)$ is torsion in $H^2(Y;\Z)$, and let $I$ be an absolute $\Z$-grading of $ECH(Y,\xi,\Gamma)$. Let $\{\sigma_k\}_{k\ge1}$ be a sequence of nonzero homogeneous classes in $ECH(Y,\xi,\Gamma)$ with $\lim_{k\to\infty} I(\sigma_k)=\infty$. Then
\begin{equation}
\label{eqn:main}
\lim_{k\to\infty} \frac{c_{\sigma_k}(Y,\lambda)^2}{I(\sigma_k)} = \op{vol}(Y,\lambda).
\end{equation}
\end{theorem}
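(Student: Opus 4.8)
The plan is to prove the two inequalities
\[
\limsup_{k\to\infty} \frac{c_{\sigma_k}(Y,\lambda)^2}{I(\sigma_k)} \le \op{vol}(Y,\lambda)
\quad\text{and}\quad
\liminf_{k\to\infty} \frac{c_{\sigma_k}(Y,\lambda)^2}{I(\sigma_k)} \ge \op{vol}(Y,\lambda)
\]
separately. Both directions should ultimately rest on Taubes's isomorphism \eqref{eqn:taubes} together with the deep input from Seiberg--Witten theory and Taubes's Gromov--Seiberg--Witten analysis: roughly, a class in Seiberg--Witten Floer cohomology at a given grading has a representative whose energy (which translates into symplectic action under the correspondence) is controlled, and as the grading goes to infinity the spectral flow grows like a quadratic function of the energy with leading coefficient governed by $\op{vol}(Y,\lambda)$. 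Concretely, I expect the key estimate to be a statement of the form: if $\sigma$ is a nonzero homogeneous class with $c_\sigma(Y,\lambda)\le L$, then $I(\sigma) \le \tfrac{L^2}{\op{vol}(Y,\lambda)} + o(L^2)$ as $L\to\infty$, and conversely every grading value of the form $I = \tfrac{L^2}{\op{vol}(Y,\lambda)} + o(L^2)$ is achieved by some class with $c_\sigma(Y,\lambda)\ge L - o(L)$. Taking $\sigma = \sigma_k$, $L = c_{\sigma_k}(Y,\lambda)$ and rearranging gives both bounds.

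First I would reduce to a convenient normalization: scaling $\lambda \mapsto r\lambda$ scales $c_\sigma$ by $r$ and $\op{vol}$ by $r^2$, so \eqref{eqn:main} is scale-invariant and we may assume $\lambda$ is nondegenerate after a small perturbation (using the defining limit for $c_\sigma$ in the degenerate case, plus continuity of both sides under $C^0$-small perturbations of $\lambda$). Next, the heart of the argument is to relate the ECH index grading $I$ to symplectic action. The ECH index of a generator $\alpha$ with $\mathcal{A}(\alpha) = L$, relative to a fixed reference generator, is a sum of a relative first Chern number term and a Conley--Zehnder/writhe term; the dominant contribution as $L\to\infty$ comes from the $c_\tau$ term, and one expects $I(\alpha) \approx \mathcal{A}(\alpha)^2/\op{vol}(Y,\lambda)$. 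To make this precise I would invoke the asymptotic behavior of the Seiberg--Witten grading: under \eqref{eqn:taubes}, $I$ corresponds to a grading on $\widehat{HM}$, and Taubes's spectral-flow computations show that the grading of the monopole class associated to a configuration with Chern--Simons-type action $L$ grows quadratically in $L$ with the precise coefficient $1/\op{vol}(Y,\lambda)$. The upper bound on $\limsup$ then follows by taking, for each $k$, an action-minimizing representative of $\sigma_k$: its action is $c_{\sigma_k}(Y,\lambda)$ up to $\epsilon$, and the index bound forces $I(\sigma_k) \ge c_{\sigma_k}^2/\op{vol} - o(c_{\sigma_k}^2)$, i.e.\ $c_{\sigma_k}^2/I(\sigma_k) \le \op{vol} + o(1)$, provided $c_{\sigma_k}\to\infty$ (which itself follows from the finiteness/growth of $I(\sigma_k)$).

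For the $\liminf$ direction one needs a lower bound on $c_{\sigma_k}(Y,\lambda)$ in terms of $I(\sigma_k)$, and this is where I expect the main obstacle. The subtlety is that $\sigma_k$ is an arbitrary sequence of homogeneous classes, not necessarily the "$U$-tower" classes, so one cannot just exhibit an explicit cycle; instead one must show that \emph{every} nonzero class in grading $I$ requires action at least $\sqrt{\op{vol}\cdot I} - o(\sqrt I)$ to be represented in filtered ECH. The natural approach is: if $c_{\sigma_k}(Y,\lambda) \le L$ then $\sigma_k$ lies in the image of $ECH^L \to ECH$, hence is represented by a cycle supported on generators of action $< L$; but the number of ECH generators of action $< L$ in a fixed grading, and more importantly the maximal grading $I$ attainable by a generator of action $<L$, is bounded above by $L^2/\op{vol} + o(L^2)$ by the index asymptotics above. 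Since $\sigma_k$ is homogeneous of grading $I(\sigma_k)$ and nonzero, some generator of that grading and action $<L$ must appear, forcing $I(\sigma_k) \le L^2/\op{vol} + o(L^2)$, which rearranges to the desired lower bound on $L = c_{\sigma_k}(Y,\lambda)$. Making the "number/maximal grading of generators of bounded action" estimate effective and uniform is the technical crux — this is essentially a volume-counting statement for Reeb orbits and their multiplicities, and controlling it likely requires the full strength of Taubes's asymptotic analysis of the Seiberg--Witten equations (the "$r\to\infty$" limit), since a naive ECH-index computation does not obviously give the sharp constant without knowing that the relevant curves/monopoles equidistribute with respect to the contact volume form.
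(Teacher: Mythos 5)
Your top-level decomposition into $\limsup\le\vol$ and $\liminf\ge\vol$ matches the paper (Propositions~\ref{prop:up} and~\ref{prop:lb}), and for the upper bound you correctly point at Taubes's isomorphism and spectral-flow estimates. However, both halves of your argument rest on a pointwise relation $I(\alpha)\approx \mc{A}(\alpha)^2/\vol$ for individual generators or chain-level representatives, and no such relation holds in either direction. Generators can have large action and small ECH index (e.g.\ orbit sets built from long hyperbolic orbits), so your claim that an action-minimizing representative ``forces $I(\sigma_k)\ge c_{\sigma_k}^2/\op{vol}-o(c_{\sigma_k}^2)$'' has no pointwise justification. Worse, for the $\liminf$ direction your key estimate --- that the maximal grading of a generator of action $<L$ is at most $L^2/\vol+o(L^2)$ --- is simply false: for an elliptic orbit $\gamma$ of action $a$ and rotation number $\rho$ one has $I(\gamma^m)\sim m^2\rho$ while $\mc{A}(\gamma^m)=ma$, and nothing forces $a^2/\rho\ge\vol$ (take a star-shaped hypersurface agreeing with an irrational ellipsoid $\partial E(a,b)$ near its short orbit, where $a^2/\rho=ab$, but bulging out elsewhere so that the volume exceeds $ab$). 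Such generators of low action and high grading must die in homology, so the Weyl law is genuinely a statement about nonzero \emph{classes}, and that is exactly what makes the theorem nontrivial. Nor can the generator bound be rescued by Seiberg--Witten theory: for irreducible solutions the relevant estimate (Lemma~\ref{lem:elb}) is $|cs(A)|\le cr^{2/3}\E(A)^{4/3}$, which combined with Proposition~\ref{prop:sfcs} bounds the grading by a quantity that grows with $r$, not by a function of the energy alone.

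The paper's actual proofs supply the two missing mechanisms. For the upper bound, one does not bound $I$ below by the action of a given representative; one \emph{constructs} a low-action representative via a max-min family of Seiberg--Witten solutions (\S\ref{sec:maxmin}), using two separate volume inputs: the Chern--Simons functional of reducibles equals $\frac14 r^2\vol+O(r)$, so by Proposition~\ref{prop:sfcs} all reducibles have grading below $-j$ once $r>r_j$ with $j\approx\frac{1}{16\pi^2}r_j^2\vol$ (Lemma~\ref{lem:red}); and the a priori energy bound $\E(A)\le\frac r2\vol+C$ (Lemma~\ref{lem:energybound}) applied at $r=r_j$. One must then control the drift of the max-min energy as $r\to\infty$ and identify its limit with $2\pi c_\sigma(Y,\lambda)$ (Proposition~\ref{prop:energyaction}); none of this is visible in your sketch. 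For the lower bound the paper avoids any index-versus-action estimate entirely: it reduces to sequences with $U\sigma_{k+1}=\sigma_k$ (using that $U$ is eventually an isomorphism), packs $[-a,0]\times Y$ by disjoint symplectically embedded balls exhausting all but $\epsilon$ of the volume, computes the induced ECH cobordism map explicitly (Lemma~\ref{lem:Phi}), and transfers the explicitly verifiable asymptotics for $\partial B(r)$ back to $(Y,\lambda)$ via the monotonicity \eqref{eqn:monotonicity}. This ball-packing reduction is the essential idea your proposal is missing, and without it (or a proof of a sharp class-level lower bound by other means) the $\liminf$ half of your argument does not go through.
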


The following application of Theorem~\ref{thm:main} was obtained in \cite{ch}:

\begin{corollary}
\label{cor:two}
\cite[Thm. 1.1]{ch}
Every (possibly degenerate) contact form on a closed three-manifold has at least two embedded Reeb orbits.
\end{corollary}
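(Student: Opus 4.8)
The plan is to argue by contradiction, deducing a violation of Theorem~\ref{thm:main} from the assumption that $(Y,\lambda)$ has at most one embedded (simple) Reeb orbit. By Taubes's theorem that the Weinstein conjecture holds, the Reeb flow has at least one embedded orbit, so the standing assumption is that there is exactly one, say $\gamma$, with action $a := \mathcal{A}(\gamma) > 0$. I would first settle the case that $\lambda$ is nondegenerate, which is the heart of the matter, and then reduce the degenerate case to it by $C^0$-approximation.

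For nondegenerate $\lambda$, the key point is that every generator of the ECH chain complex is built from $\gamma$ alone: it is either the empty set or $\{(\gamma,m)\}$ for some $m\ge 1$ (with $m=1$ forced when $\gamma$ is hyperbolic), of action $ma$. Hence for every $\Gamma\in H_1(Y)$ and every $L$,
\[
\dim_{\Z/2} ECC^L(Y,\lambda,\Gamma,J) \le \#\{m\ge 0 : ma < L\} < \frac{L}{a}+1,
\]
so the filtered chain groups grow at most linearly in $L$. To apply Theorem~\ref{thm:main} I must choose a good $\Gamma$. Since $TY\cong\xi\oplus\R$ and $Y$ is an orientable three-manifold, $c_1(\xi)$ is divisible by two, so there is a class $\Gamma$ with $c_1(\xi)+2\op{PD}(\Gamma)$ torsion; fix such a $\Gamma$ and an absolute $\Z$-grading $I$. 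By Taubes's isomorphism \eqref{eqn:taubes} and the structure of monopole Floer homology for a torsion spin-c structure \cite{km}, $ECH(Y,\xi,\Gamma)$ contains an infinite $U$-tower, i.e.\ nonzero homogeneous classes $\sigma_1,\sigma_2,\ldots$ with $U\sigma_k=\sigma_{k-1}$ and $I(\sigma_k)=I(\sigma_1)+2(k-1)\to\infty$.

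The contradiction arises from counting $\{k : c_{\sigma_k}(Y,\lambda)<L\}$ in two ways. Applying Theorem~\ref{thm:main} to $\{\sigma_k\}$ gives $c_{\sigma_k}(Y,\lambda)=\sqrt{I(\sigma_k)\op{vol}(Y,\lambda)}\,(1+o(1))\sim\sqrt{2k\,\op{vol}(Y,\lambda)}$, so
\[
\#\{k : c_{\sigma_k}(Y,\lambda)<L\} = \frac{L^2}{2\op{vol}(Y,\lambda)}\,(1+o(1))
\]
grows quadratically in $L$. On the other hand, each such $\sigma_k$ lies in the image of the map \eqref{eqn:iim} with this $L$, and these classes are linearly independent since they occupy distinct gradings; hence their number is at most $\dim ECH^L(Y,\lambda,\Gamma)\le\dim ECC^L(Y,\lambda,\Gamma,J)<L/a+1$, which is linear in $L$. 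For $L$ large this is impossible, so a nondegenerate contact form has at least two embedded Reeb orbits. (This subsumes the degenerate geometric subcases of $\gamma$: if $\gamma$ is hyperbolic or $[\gamma]$ is non-torsion, the chosen $ECH(Y,\xi,\Gamma)$ is even finite-dimensional, directly contradicting the infinite tower.)

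Removing the nondegeneracy hypothesis is where I expect the real difficulty to lie. Given a degenerate $\lambda$ whose Reeb flow has a single embedded orbit $\gamma_0$ of action $a_0$, one takes nondegenerate $C^0$-approximations $\lambda_n\to\lambda$ and uses $c_{\sigma_k}(Y,\lambda)=\lim_n c_{\sigma_k}(Y,\lambda_n)$ together with the validity of Theorem~\ref{thm:main} for $\lambda$ itself. The obstacle is that $\gamma_0$ may split into several nondegenerate orbits of $\lambda_n$, so the linear bound on $\dim ECC^L$ does not transfer directly to the approximations, and the spectral values merely converge into the discrete set $a_0\Z$. The plan is instead to use a compactness argument for Reeb orbits under $C^0$-convergence to show that the generators of $\lambda_n$ below action $L$ are confined, with actions clustering near $a_0\Z$, to too few effective degrees of freedom to support the quadratically many linearly independent tower classes forced by Theorem~\ref{thm:main}; establishing this uniform control—ruling out that $\gamma_0$ supports enough splitting to reconstitute the quadratic count—is the crux of the degenerate case and the step I would expect to require the most care.
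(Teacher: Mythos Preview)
The paper does not itself prove Corollary~\ref{cor:two}; it records it as an application and defers to \cite{ch}, noting only that the upper bound in Proposition~\ref{prop:up} is what is used. Your nondegenerate argument is correct and is essentially the mechanism behind \cite{ch} in that case: with a single embedded orbit the chain groups satisfy $\dim ECC^L(Y,\lambda,\Gamma,J)=O(L)$, while the upper bound $c_{\sigma_k}\le C\sqrt{k}$ (which is all you actually use from Theorem~\ref{thm:main}) forces $\#\{k:c_{\sigma_k}<L\}\gtrsim L^2$, and these classes sit independently in the image of \eqref{eqn:iim}. Your justification that one can choose $\Gamma$ with $c_1(\xi)+2\op{PD}(\Gamma)$ torsion is also fine: $TY$ trivial gives $w_2(\xi)=0$, hence $c_1(\xi)\in 2H^2(Y;\Z)$.

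The degenerate case, however, has a real gap, and the sketch you propose cannot close it. The obstruction is exactly the one you flag but then try to argue around: a single degenerate periodic orbit can bifurcate into arbitrarily many nondegenerate orbits under perturbation, so there is no $n$-independent bound on $\dim ECC^L(Y,\lambda_n,\Gamma,J)$, and your linear-versus-quadratic count does not survive the limit. The weaker fact that the spectral numbers $c_{\sigma_k}(Y,\lambda)$ land in $a_0\Z_{\ge 0}$ (which does follow from compactness of Reeb orbits) is also not enough: the $c_{\sigma_k}$ are only known to be nondecreasing, so arbitrarily many of them may coincide, and for degenerate $\lambda$ there is no finite-dimensional filtered chain group in which to run the independence argument. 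The proof in \cite{ch} avoids this by abandoning the dimension count entirely. From $c_{\sigma_k}\le C\sqrt{k}$ one extracts a subsequence with $c_{\sigma_{k+1}}-c_{\sigma_k}\to 0$; since $U$ is induced by counting index-two $J$-holomorphic curves through a fixed point $(0,y)\in\R\times Y$, this produces curves of $d\lambda$-area tending to zero through $(0,y)$, and a Gromov/Taubes-type compactness argument then yields a Reeb orbit through $y$. Taking $y\notin\gamma$ gives the second embedded orbit, and this curve-based argument passes to degenerate $\lambda$ by approximation in a way the chain-level dimension bound cannot.
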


The proof of Theorem~\ref{thm:main} has two parts. In \S\ref{sec:ub} we show that the left hand side of \eqref{eqn:main} (with lim replaced by lim sup) is less than or equal to the right hand side. This is actually all that is needed for Corollary~\ref{cor:two}. In \S\ref{sec:lb} we show that the left hand side (with lim replaced by lim inf) is greater than or equal to the right hand side. The two arguments are independent of each other and can be read in either order. The proof of the upper bound uses ingredients from Taubes's proof of the isomorphism \eqref{eqn:taubes}. The proof of the lower bound uses properties of ECH cobordism maps to reduce to the case of a sphere, where \eqref{eqn:main} can be checked explicitly.

Both arguments use Seiberg-Witten theory; in particular, Seiberg-Witten theory is used to define ECH cobordism maps in \cite{cc2}. However the proof of the lower bound given here would not need Seiberg-Witten theory if one could give an alternate construction of ECH cobordism maps.

\section{The upper bound}
\label{sec:ub}

In this section we prove the upper bound half of Theorem~\ref{thm:main}:

\begin{proposition}\label{prop:up}
Under the assumptions of Theorem~\ref{thm:main},
\begin{equation}
\label{eqn:up}
\limsup_{k\to\infty} \frac{c_{\sigma_k}(Y,\lambda)^2}{I(\sigma_k)}\le\vol.
\end{equation}
\end{proposition}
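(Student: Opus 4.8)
The plan is to transport \eqref{eqn:up} to Seiberg--Witten Floer cohomology via Taubes's isomorphism \eqref{eqn:taubes}, and then to extract the volume from the quantitative structure of Taubes's proof of that isomorphism. Recall that \eqref{eqn:taubes} is proved by deforming the Seiberg--Witten equations on $Y$ using the contact form $\lambda$, with a large perturbation parameter $r>0$; a solution $\mathfrak{c}=(A,\Psi)$ of the $r$-perturbed equations carries a normalized energy built from $i\int_Y\lambda\wedge F_{A}$, and for $r$ large the generators of the perturbed Seiberg--Witten Floer complex of given energy correspond to ECH generators of proportional symplectic action. The first step is to upgrade this to a \emph{filtered} statement: one has filtered Seiberg--Witten Floer groups with maps to the unfiltered theory, and — this is part of what underlies the construction of cobordism maps in \cite{cc2}, following Hutchings--Taubes — as $r\to\infty$ these recover the filtered groups $ECH^L$ and the inclusion-induced maps \eqref{eqn:iim}. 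In particular, writing $\widehat{\sigma}_k$ for the image of $\sigma_k$ under \eqref{eqn:taubes}, the number $c_{\sigma_k}(Y,\lambda)$ is recovered, up to a factor of $2\pi$ and an error tending to $0$ as $r\to\infty$, as the minimal energy of a cocycle in the $r$-perturbed complex representing $\widehat{\sigma}_k$.

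The heart of the argument is an a priori bound on the energy of a Seiberg--Witten solution of prescribed grading. Since $\widehat{\sigma}_k$ is homogeneous of degree $-I(\sigma_k)$ (the sign coming from the grading reversal in \eqref{eqn:taubes}), every generator appearing in any cocycle representing it is a solution $\mathfrak{c}$ of the $r$-perturbed equations with $\op{gr}(\mathfrak{c})=-I(\sigma_k)$. Taubes's spectral flow computation, together with his a priori estimates, expresses $\op{gr}(\mathfrak{c})$ in terms of $r$ and the energy of $\mathfrak{c}$: the leading term is quadratic in $r$, with coefficient proportional to $\op{vol}(Y,\lambda)$ — this is where the volume enters, via the dominant part $-2ir\,d\lambda$ of the curvature and the identity $i\int_Y\lambda\wedge(-2ir\,d\lambda)=2r\,\op{vol}(Y,\lambda)$ — together with a subleading term linear in $r$ times the energy, and a remaining error that is $o(r^2)$ as $r\to\infty$. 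Solving this relation for the energy bounds the energy of any grading-$(-I(\sigma_k))$ solution by an explicit expression of the shape $C_1 I(\sigma_k)/r + C_2\, r + o(r)$, where $C_1C_2$ is a fixed multiple of $\op{vol}(Y,\lambda)$. Hence $\widehat{\sigma}_k$ lies in the image of the filtered-to-unfiltered map at energy level at most this expression, for every sufficiently large $r$.

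Finally I would optimize the free parameter. The bound above holds for all large $r$, and choosing $r=r_k$ growing like $\sqrt{I(\sigma_k)}$ both balances the two main terms and renders the error lower order than $\sqrt{I(\sigma_k)}$; passing back through the filtered correspondence (legitimate since $r_k\to\infty$) yields $c_{\sigma_k}(Y,\lambda)^2\le\op{vol}(Y,\lambda)\,I(\sigma_k)+o(I(\sigma_k))$, and dividing by $I(\sigma_k)$ and taking $\limsup$ gives \eqref{eqn:up}. This is the only half of Theorem~\ref{thm:main} needed for Corollary~\ref{cor:two}. The main obstacle is the second step: one must make Taubes's spectral flow asymptotics precise enough to pin down the leading constant and to control the error \emph{uniformly} over all solutions of a given grading at a given $r$, and one must coordinate the size of $r$ with the energy/action filtration level so that the filtered correspondence applies throughout. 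Some such global input is genuinely needed, since the ECH index of a single generator does not by itself control its symplectic action (thin orbit sets can have small index and large action), so one really requires the module structure of $ECH(Y,\xi,\Gamma)$ made accessible through \eqref{eqn:taubes}.
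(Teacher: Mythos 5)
Your overall strategy is the right one---pass to Seiberg--Witten Floer cohomology via \eqref{eqn:taubes}, play the grading of the generators against an energy bound, and optimize over $r\sim\sqrt{I(\sigma_k)}$---but the two steps you yourself single out as the ``heart'' and the ``main obstacle'' are exactly where the proof lives, and as stated each contains a genuine gap. First: there is no relation of the form $\op{gr}(\mathfrak{c}) = (\mathrm{const})\,r^2\op{vol}(Y,\lambda) + (\mathrm{const})\,r\,\E(\mathfrak{c}) + o(r^2)$ for irreducible solutions that one could ``solve for the energy''. Taubes's spectral flow estimate (Proposition~\ref{prop:sfcs}) ties the grading to the Chern--Simons functional $cs(A)$, not to the energy, and for irreducibles the only link between the two is the one-sided inequality $|cs(A)|\le c\,r^{2/3}\E(A)^{4/3}$ of Lemma~\ref{lem:elb}; an irreducible solution of fixed grading in fact has $cs(A)=O(r^{2-\delta})$, with no volume-proportional $r^2$ term (already the trivial solution, with $r$-independent grading and energy $o(1)$, violates your proposed formula). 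The term $\frac14 r^2\vol$ lives only on the \emph{reducible} locus, and in the actual argument it plays a different role: combined with Proposition~\ref{prop:sfcs} it shows (Lemma~\ref{lem:red}) that for $r$ above the threshold $r_j\approx 4\pi\sqrt{j/\op{vol}(Y,\lambda)}$ of \eqref{eqn:rj} no reducible generator has grading $\ge -j$. The energy of the irreducible representatives is then controlled by the separate, grading-independent a priori bound $\E(A)\le\frac{r}{2}\vol+C$ of Lemma~\ref{lem:energybound}, evaluated at $r=r_j$. It is the conjunction of ``$r$ may not be taken below $r_j$'' with this a priori bound that yields $\E\le 2\pi\sqrt{j\vol}+C$, not the inversion of a grading--energy identity.

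Second, and more seriously: an energy bound for representatives of $\hat\sigma_k$ at the single parameter value $r=r_k\sim\sqrt{I(\sigma_k)}$ does not yet bound $c_{\sigma_k}(Y,\lambda)$. The identification of the max-min energy with $2\pi c_{\sigma}(Y,\lambda)$ (Proposition~\ref{prop:energyaction}), and the filtered isomorphism \eqref{eqn:cci} underlying it, hold only for $r$ sufficiently large depending on the action level $L$ and on $(Y,\lambda,J)$, with no quantitative control on that threshold; there is no reason that $r_k\sim\sqrt{I(\sigma_k)}$ is large enough relative to $L\sim\sqrt{I(\sigma_k)}$. You flag this coordination problem as ``the main obstacle'' but supply no mechanism for it. The paper's mechanism is the max-min family of \S\ref{sec:maxmin}: a continuous-in-$r$ family of solutions realizing $\F_{\hat\sigma}(r)$ and satisfying $\frac{d}{dr}\F_{\hat\sigma}(r)=-\frac12\E_{\hat\sigma}(r)$ (Lemma~\ref{lem:fam}), along which the bounds $|cs|\le r^{1-\gamma}\E$ and $|e_\mu|\le\kappa\E$ give a differential inequality showing that $-2\F_{\hat\sigma}(r)/r$, and hence $\E_{\hat\sigma}(r)$, grows by at most a factor $g(\bar r)\to 1$ as one flows from $r=r_j$ out to $r=\infty$; only at $r=\infty$ is the energy converted into $c_{\sigma}(Y,\lambda)$. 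Without this bridge from $r_j$ to $\infty$, or a substitute for it, the argument does not close.
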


To prove Proposition~\ref{prop:up}, we can assume without loss of generality that $\lambda$ is nondegenerate. To see this, assume that \eqref{eqn:up} holds for nondegenerate contact forms and suppose that $\lambda$ is degenerate. We can find a sequence of functions $f_1>f_2>\cdots > 1$, which $C^0$-converges to $1$, such that $f_n\lambda$ is nondegenerate for each $n$. It follows from the monotonicity property in \cite[Lem.\ 4.2]{qech} that
\[
c_{\sigma_k}(Y,\lambda)
\le
c_{\sigma_k}(Y,f_n\lambda)
\]
for every $n$ and $k$. For each $n$, it follows from this and the inequality \eqref{eqn:up} for $\lambda_n$ that
\[
\limsup_{k\to\infty} \frac{c_{\sigma_k}(Y,\lambda)^2}{I(\sigma_k)}\le \op{vol}(Y,f_n\lambda).
\]
Since $\lim_{n\to\infty}\op{vol}(Y,f_n\lambda) = \op{vol}(Y,\lambda)$, we deduce the inequality \eqref{eqn:up} for $\lambda$.

Assume henceforth that $\lambda$ is nondegenerate.  In \S\ref{sec:swf}--\S\ref{sec:mm} below we review some aspects of Taubes's proof of the isomorphism \eqref{eqn:taubes} and prove some related lemmas. In \S\ref{sec:proofup} we use these to prove Proposition~\ref{prop:up}.

\subsection{Seiberg-Witten Floer cohomology}
\label{sec:swf}

The proof of the isomorphism \eqref{eqn:taubes} involves perturbing the Seiberg-Witten equations on $Y$. To write down the Seiberg-Witten equations we first need to choose a Riemannian metric on $Y$.
Let $J$ be a generic almost complex structure on $\R\times Y$ as needed to define the ECH chain complex.  The almost complex structure $J$ determines a Riemannian metric $g$ on $Y$ such that the Reeb vector field $R$ has length $1$ and is orthogonal to the contact planes $\xi$, and
\begin{equation}
\label{eqn:Jg}
g(v,w) = \frac{1}{2}d\lambda(v,Jw), \quad\quad v,w\in\xi_y.
\end{equation}
Note that this metric satisfies
\begin{equation}
\label{eqn:metric}
|\lambda| = 1, \quad\quad {*}d\lambda = 2\lambda.
\end{equation}
One could dispense with the factors of $2$ in \eqref{eqn:Jg} and \eqref{eqn:metric}, but we are keeping them for consistency with \cite{tw} and its sequels.

Let $\mathbb{S}$ denote the spin bundle for the spin-c structure 
$\mathfrak{s}_{\xi}+\op{PD}(\Gamma)$. The inputs to the Seiberg-Witten equations for this spin-c structure are a connection ${\mathbb A}$ on $\det(\mathbb{S})$ and a section $\psi$ of $\mathbb{S}$.
The spin bundle $\mathbb{S}$ splits as a direct sum
\[
\mathbb{S}=E\oplus (E\otimes \xi),
\]
where $E$ and $E\otimes \xi$ are, respectively, the $+i$ and $-i$ eigenspaces of Clifford multiplication by $\lambda$. Here $\xi$ is regarded as a complex line bundle using the metric and the orientation.  A connection ${\mathbb A}$ on $\op{det}(\mathbb{S})$ is then equivalent to a (Hermitian) connection $A$ on $E$ via the relation ${\mathbb A} = A_0+2A$, where $A_0$ is a distinguished connection on $\xi$ reviewed in \cite[\S 2.1]{tw1}.

For a positive real number $r$, consider the following version of the perturbed Seiberg-Witten equations for a connection $A$ on $E$ and spinor $\psi$:  
\begin{equation}
\label{eqn:taubessw}
\begin{split}
{*}F_A&=r(\langle cl(\cdot)\psi,\psi\rangle-i\lambda)+i({*}d\mu+\bar{\omega})\\
D_A\psi&= 0.
\end{split}
\end{equation}
Here $F_A$ denotes the curvature of $A$; $cl$ denotes Clifford multiplication; $\bar{\omega}$ denotes the harmonic 1-form such that ${*} \bar{\omega}/\pi$ represents $c_1(\xi)\in H^2(Y;\R)$; and $\mu$ is a generic $1$-form such that $d\mu$ has ``P-norm" less than $1$, see \cite[\S2.1]{tw1}. Finally, $D_A$ denotes the Dirac operator determined by the connection ${\mathbb A}$ on $\op{det}({\mathbb S})$ corresponding to the connection $A$ on $E$.

The group of {\em gauge transformations\/} $C^\infty(Y,S^1)$ acts on the space of pairs $({\mathbb A},\psi)$ by $g\cdot({\mathbb A},\psi)=({\mathbb A}-2g^{-1}dg,g\psi)$. The quotient of the space of pairs $({\mathbb A},\psi)$ by the group of gauge transformations is called the {\em configuration space\/}. The set of solutions to \eqref{eqn:taubessw} is invariant under gauge transformations.  A solution to the Seiberg-Witten equations is called {\em reducible} if $\psi \equiv 0$ and {\em irreducible} otherwise.
An irreducible solution is called {\em nondegenerate} if it is cut out transversely after modding out by gauge transformations, see \cite[\S 3.1]{tw1}.

For fixed $\mu$, when $r$ is not in a certain discrete set, there are only finitely many irreducible solutions to \eqref{eqn:taubessw} and these are all nondegenerate.  In this case one can define the Seiberg-Witten Floer cohomology chain complex with $\Z/2$ coefficients, which we denote by $\widehat{CM}^*(Y,\frak{s}_{\xi,\Gamma},\lambda,J,r)$. The chain complex is generated by irreducible solutions to \eqref{eqn:taubessw}, along with additional generators determined by the reducible solutions. The differential counts solutions to a small abstract perturbation of the four-dimensional Seiberg-Witten equations on $\R\times Y$. In principle the chain complex differential may depend on the choice of abstract perturbation, but since the abstract perturbation is irrelevant to the proof of Proposition~\ref{prop:up}, we will omit it from the notation.

\subsection{The grading}
\label{sec:grading}

Under our assumption that $c_1(\xi)+2\op{PD}(\Gamma)$ is torsion, the chain complex $\widehat{CM}^*$ has a noncanonical absolute $\Z$-grading defined as follows, cf.\ \cite[\S14.4]{km}. The linearization of the equations \eqref{eqn:taubessw} modulo gauge equivalence at a pair $(A,\psi)$, not necessarily solving the equations \eqref{eqn:taubessw}, defines a self-adjoint Fredholm operator ${\mathcal L}_{A,\psi}$.  If $(A,\psi)$ is a nondegenerate irreducible solution to \eqref{eqn:taubessw}, then the operator ${\mathcal L}_{A,\psi}$ has trivial kernel, and one defines the grading $gr(A,\psi)\in\Z$ to be the spectral flow from ${\mathcal L}_{A,\psi}$ to a reference self-adjoint Fredholm operator ${\mathcal L}_0$ between the same spaces with trivial kernel. The grading function $gr$ depends on the choice of reference operator; fix one below. To describe the gradings of the remaining generators, recall that the set of reducible solutions modulo gauge equivalence is a torus ${\mathbb T}$ of dimension $b_1(Y)$. As explained in \cite[\S35.1]{km}, one can perturb the Seiberg-Witten equations using a Morse function
\begin{equation}
\label{eqn:ft}
f:{\mathbb T}\to\R,
\end{equation}
so that the chain complex generators arising from reducibles are identified with pairs $((A,0),\phi)$, where $(A,0)$ is a critical point of $f$ and $\phi$ is a suitable eigenfunction of the Dirac operator $D_A$.  The grading of each such generator is less than or equal to $gr(A,0)$, where the latter is defined as the spectral flow to ${\mathcal L}_0$ from an appropriate perturbation of the operator ${\mathcal L}_{A,0}$.

We will need the following key result of Taubes relating the grading to the Chern-Simons functional.
Fix a reference connection $A_E$ on $E$. Given any other connection $A$ on $E$, define the {\em Chern-Simons functional\/}
\begin{equation}
\label{eqn:cs}
cs(A) = -\int_Y(A-A_E)\wedge(F_A+F_{A_E}-2i{*}\bar{\omega}).
\end{equation}
Note that this functional is gauge invariant because the spin-c structure $\frak{s}_\xi + \op{PD}(\Gamma)$ is assumed torsion.

\begin{proposition}
\label{prop:sfcs}
\cite[Prop.\ 5.1]{tw1}
There exists $K,r_* > 0$ such that for all $r>r_*$, if $(A,\psi)$ is a nondegenerate irreducible solution to \eqref{eqn:taubessw}, or a reducible solution which is a critical point of \eqref{eqn:ft}, then
\begin{equation}
\label{eqn:speceq}
\left|gr(A,\psi)+\frac{1}{4\pi^2}cs(A)\right| < K r^{31/16}.
\end{equation}
\end{proposition}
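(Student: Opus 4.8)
The strategy is to realize $gr(A,\psi)$ as a spectral flow, convert it via the Atiyah--Patodi--Singer theorem into the index of a four-dimensional operator on the cylinder $[0,1]\times Y$, identify the bulk term of the index formula with $-\frac{1}{4\pi^2}cs(A)$ up to genuinely lower-order corrections, and bound the $\eta$-invariant boundary correction by $Kr^{31/16}$ using Taubes's a priori estimates for solutions of \eqref{eqn:taubessw}.

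First I would reduce to a comparison of two configurations. Since $gr(A,\psi)$ is by definition the spectral flow from $\mathcal{L}_{A,\psi}$ to the fixed reference operator $\mathcal{L}_0$, and since $gr$ of any single fixed configuration is an $r$-independent constant that may be absorbed into $K$, it suffices to bound the spectral flow of a path from $\mathcal{L}_{A,\psi}$ to $\mathcal{L}_{A_E,0}$ (inserting the generic perturbation of \S\ref{sec:grading} if $A_E$ is reducible, so that both endpoint operators have trivial kernel); here I use $cs(A_E)=0$, which is immediate from \eqref{eqn:cs}. I would connect the two by the linear path of connections $A_t = A_E + t(A-A_E)$ together with a path of spinors $\psi_t$ from $0$ to $\psi$, which assemble into a connection $\mathbb{A}$ and a spinor $\Psi$ on $[0,1]\times Y$. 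Standard spectral-flow/index theory then identifies the spectral flow of $t\mapsto\mathcal{L}_{A_t,\psi_t}$ with the APS index of the four-dimensional linearized Seiberg--Witten operator $\mathcal{D}_{\mathbb{A},\Psi}$ on the cylinder with APS boundary conditions.

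Now apply the APS index theorem. The bulk integrand of the Seiberg--Witten index density is a multiple of $F_{\mathbb{A}}\wedge F_{\mathbb{A}}$ together with $\chi$- and $\sigma$-terms that vanish on a cylinder; by the transgression (Stokes) formula, $\int_{[0,1]\times Y}F_{\mathbb{A}}\wedge F_{\mathbb{A}}$ equals a Chern--Simons three-form of $Y$ evaluated at the two ends, and for the linear path this works out, after writing $\mathbb{A}=A_0+2A$, to a constant multiple of $cs(A)$ plus terms linear in $A-A_E$ involving the fixed forms $F_{A_0}$ and $*\bar{\omega}$; in a good gauge these are $O(r^{1/2})$ by Taubes's bound $\int_Y|F_A|^2 = O(r)$, hence negligible compared to $r^{31/16}$. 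The boundary term of the APS formula is $-\tfrac12\big(\eta(\mathcal{L}_{A,\psi}) - \eta(\mathcal{L}_{A_E,0})\big)$, and $\eta(\mathcal{L}_{A_E,0})$ is an $r$-independent constant. So, after checking that the constants combine to give precisely $\tfrac{1}{4\pi^2}$, everything reduces to the estimate $|\eta(\mathcal{L}_{A,\psi})| = O(r^{31/16})$.

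This last estimate is the heart of the matter and the main obstacle: it is a bound on the spectral asymmetry of the perturbed operator $\mathcal{L}_{A,\psi}$ in the $r\to\infty$ limit. Proving it requires the full strength of Taubes's a priori analysis of solutions of \eqref{eqn:taubessw} --- the pointwise bound $|\psi|\le 1+O(r^{-1})$, the bound $\int_Y|F_A|^2 = O(r)$, the concentration of $F_A$ into tubes of cross-sectional radius $O(r^{-1/2})$ around the Reeb orbits of $\alpha$, and the local model for $(A,\psi)$ near those orbits. From these one bounds the number of eigenvalues of $\mathcal{L}_{A,\psi}$ in a fixed window by interpolating between a crude $O(r^2)$ count and the finer local estimates, and $31/16$ is exactly the exponent produced by that interpolation; integrating gives the $\eta$ bound. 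The case of a reducible critical point of \eqref{eqn:ft} is identical, with the perturbed operator $\mathcal{L}_{A,0}$ in place of $\mathcal{L}_{A,\psi}$.
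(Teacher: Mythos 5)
The paper does not actually prove Proposition~\ref{prop:sfcs}: it is quoted verbatim from \cite[Prop.\ 5.1]{tw1}, so there is no internal argument to compare against. Judged on its own terms, your proposal is a reasonable reconstruction of the framework behind Taubes's result --- realize $gr(A,\psi)$ as a spectral flow, extract the leading term $-\frac{1}{4\pi^2}cs(A)$ by an index/transgression computation on the cylinder, and bound the remainder by $r^{31/16}$ --- but it is an outline rather than a proof, because the one step you defer is exactly the content of the cited proposition.

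Concretely, everything up to your last paragraph is bookkeeping, and it is essentially correct bookkeeping (modulo choosing a gauge to make sense of the terms linear in $A-A_E$, and the fact that for these solutions $\int_Y|F_A|^2$ is $O(r^2)$, not $O(r)$ --- though $O(r)$ control of the linear terms still follows and is still negligible). The entire difficulty is the assertion $|\eta(\mathcal{L}_{A,\psi})|=O(r^{31/16})$, and your justification --- ``interpolating between a crude $O(r^2)$ count and the finer local estimates, and $31/16$ is exactly the exponent produced by that interpolation'' --- names the answer without supplying the argument. That estimate occupies most of \S 5 of \cite{tw1} (together with a separate paper of Taubes on asymptotic spectral flow for Dirac operators); it requires uniform-in-$r$ control of the low spectrum of $\mathcal{L}_{A,\psi}$ via heat-kernel/Weyl-type eigenvalue counting combined with the a priori estimates, and the exponent $31/16$ emerges only after optimizing several competing errors. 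Two further substantive points. First, at the stage where Proposition~\ref{prop:sfcs} is invoked one does not yet know that $F_A$ concentrates in tubes around Reeb orbits with a good local vortex model --- the available inputs are only the pointwise a priori bounds on $|\psi|$, $|\nabla_A\psi|$ and $|F_A|$ --- so that part of your claimed input is not available for a general irreducible solution. Second, the reducible case is not ``identical'': there $\psi\equiv 0$, the curvature is spread over all of $Y$ rather than concentrated, and the required statement is an asymptotic spectral flow estimate for the Dirac operator coupled to a connection containing the large term $-\frac{1}{2}ir\lambda$, which is a genuinely separate piece of analysis. So the proposal frames the problem correctly but leaves its essential content unproved.
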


\subsection{Energy}
\label{sec:energy}

Given a connection $A$ on $E$, define the {\em energy\/}
\[
\E(A) = i\int_Y \lambda\wedge F_A.
\]
Filtered ECH has a Seiberg-Witten analogue defined using the energy functional as follows.  Fix $r$ such that the chain complex $\widehat{CM}^*$ is defined. Given a real number $L$, define $\widehat{CM}^*_L$ to be the submodule of $\widehat{CM}^*$ spanned by generators with energy less than $2\pi L$.  It is shown in \cite{tw1}, as reviewed in \cite[Lem.\ 2.3]{cc2}, that if $r$ is sufficiently large, then all chain complex generators with energy less than $2\pi L$ are irreducible, and $\widehat{CM}^*_L$ is a subcomplex, whose homology we denote by $\widehat{HM}^*_L$.  Moreover, as shown in \cite{tw1} and reviewed in \cite[Eq.\ (35)]{cc2}, if there are no ECH generators of action exactly $L$ and if $r$ is sufficiently large, then there is a canonical isomorphism of relatively graded chain complexes
\begin{equation}
\label{eqn:cci}	
ECC_*^L(Y,\lambda,\Gamma,J)
\longrightarrow\widehat{CM}^{-*}_L(Y,\frak{s}_{\xi,\Gamma},\lambda_1,J_1,r).
\end{equation}
Here $(\lambda_1,J_1)$ is an ``L-flat approximation'' to $(\lambda,J)$, which is obtained by suitably modifying $(\lambda,J)$ near the Reeb orbits of action less than $L$; the precise definition is reviewed in \cite[\S3.1]{cc2} and will not be needed here.

The isomorphism \eqref{eqn:cci} is induced by a bijection on generators; the idea is that in the $L$-flat case\footnote{In the non-$L$-flat case, there may be several Seiberg-Witten solutions corresponding to the same ECH generator, and/or Seiberg-Witten solutions corresponding to sets of Reeb orbits with multiplicities which are not ECH generators because they include hyperbolic orbits with multiplicity greater than one. See \cite[\S5.c, Part 2]{e1}.}, if $r$ is sufficiently large, then for every ECH generator $\alpha$ of action less than $L$, there is a corresponding irreducible solution $(A,\psi)$ to \eqref{eqn:taubessw} such that the zero set of the $E$ component of $\psi$ is close to the Reeb orbits in $\alpha$, the curvature $F_A$ is concentrated near these Reeb orbits, and the energy of this solution is approximately $2\pi\A(\alpha)$.

The isomorphism of chain complexes \eqref{eqn:cci} induces an isomorphism on homology
\begin{equation}
\label{eqn:Lflat}
ECH_*^L(Y,\lambda,\Gamma,J) \stackrel{\simeq}{\longrightarrow}  \widehat{HM}^{-*}_L(Y,\frak{s}_{\xi,\Gamma},\lambda_1,J_1,r),
\end{equation}
and inclusion of chain complexes defines a map
\begin{equation}
\label{eqn:iim1}
\widehat{HM}^{-*}_L(Y,\frak{s}_{\xi,\Gamma},\lambda_1,J_1,r) \longrightarrow \widehat{HM}^{-*}(Y,\mathfrak{s}_{\xi,\Gamma}).
\end{equation}
Composing the above two maps gives a map 
\begin{equation}
\label{eqn:TL}
ECH_*^L(Y,\lambda,\Gamma,J) \longrightarrow \widehat{HM}^{-*}(Y,\mathfrak{s}_{\xi,\Gamma}).
\end{equation}
The isomorphism \eqref{eqn:taubes} is the direct limit over $L$ of the maps \eqref{eqn:TL}.

\subsection{Volume in Seiberg-Witten theory}
\label{sec:volume}

The volume enters into the proof of Proposition~\ref{prop:up} in two essential ways.

The first way is as follows. It is shown in \cite[\S3]{tw} that for any given grading, there are no generators arising from reducibles if $r$ is sufficiently large. That is, given an integer $j$, let $s_j$ be the supremum of all values of $r$ such that there exists a generator of the chain complex $\widehat{CM}^*(Y,\frak{s}_{\xi,\Gamma},\lambda,J,r)$ with grading at least $-j$ associated to a reducible solution to \eqref{eqn:taubessw}. Then $s_j<\infty$ for all $j$.

We now give an upper bound on the number $s_j$ in terms of the volume.
Fix $0<\delta<\frac{1}{16}$. Given a positive integer $j$, let $r_j$ be the largest real number such that
\begin{equation}
\label{eqn:rj}
j = \frac{1}{16\pi^2}r_j^2\op{vol}(Y,\lambda) - r_j^{2-\delta}.
\end{equation}

\begin{lemma}
\label{lem:red}
If $j$ is sufficiently large, then $s_j < r_j$.
\end{lemma}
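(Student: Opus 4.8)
The plan is to control the grading of reducible solutions to \eqref{eqn:taubessw} at parameter $r$ and show that this grading is strictly less than $-j$ once $r \ge r_j$ and $j$ is large; since $r_j$ grows like $\sqrt{j}$, for such $r$ we then have (in the notation of the lemma) $r > r_j$ implies there are no reducibles of grading $\ge -j$, hence $s_j \le r_j$, and a little care with the strict inequality gives $s_j < r_j$. The key input is Proposition~\ref{prop:sfcs}: for a reducible critical point $(A,0)$ of \eqref{eqn:ft}, the grading satisfies $gr(A,0) = -\frac{1}{4\pi^2}cs(A) + O(r^{31/16})$. So everything reduces to estimating the Chern-Simons functional $cs(A)$ from below for reducible solutions.

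First I would recall that a reducible solution has $\psi \equiv 0$, so the curvature equation in \eqref{eqn:taubessw} becomes ${*}F_A = -ir\lambda + i({*}d\mu + \bar\omega)$, i.e. $F_A = -ir{*}\lambda + i(d\mu + {*}\bar\omega)$. The dominant term is $F_A \approx -ir{*}\lambda$, and ${*}\lambda = \frac12\,{*}(\,{*}d\lambda)$... more simply, from \eqref{eqn:metric} we have ${*}d\lambda = 2\lambda$, hence ${*}\lambda = \tfrac12 d\lambda$, so $F_A \approx -\tfrac{ir}{2}\,d\lambda$ up to the bounded perturbation terms. Plugging $A$ into the definition \eqref{eqn:cs} of $cs$ and using that $A - A_E$ pairs with $F_A + F_{A_E} - 2i{*}\bar\omega$, the leading behavior is $cs(A) \sim -\frac{r^2}{4}\int_Y \lambda \wedge d\lambda \cdot(\text{const})$; keeping track of constants one expects $cs(A) = -\frac{r^2}{4}\op{vol}(Y,\lambda) + O(r)$, or possibly with a different explicit rational constant — the precise constant is exactly what makes the coefficient $\frac{1}{16\pi^2}$ appear in \eqref{eqn:rj}. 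Combining with Proposition~\ref{prop:sfcs}, $gr(A,0) = \frac{1}{4\pi^2}\cdot\frac{r^2}{4}\op{vol}(Y,\lambda) + O(r^{31/16}) = \frac{1}{16\pi^2}r^2\op{vol}(Y,\lambda) + O(r^{31/16})$. For generators associated to reducibles the grading is $\le gr(A,0)$ as noted in \S\ref{sec:grading}, but more importantly we want a \emph{lower} bound on the grading of some reducible generator, or rather we want: any reducible generator at parameter $r$ has grading $\ge$ something growing like $r^2$. The cleanest route is to observe that the relevant grading inequality goes the correct direction — Taubes shows in \cite{tw} that \emph{all} reducible generators have grading bounded below by essentially $\frac{1}{16\pi^2}r^2\op{vol}(Y,\lambda)$ minus a lower-order error — and then compare with $-j$.

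So the argument concludes: if $(A,0)$ is any reducible generator of $\widehat{CM}^*(Y,\frak{s}_{\xi,\Gamma},\lambda,J,r)$, then $gr(A,0) \ge \frac{1}{16\pi^2}r^2\op{vol}(Y,\lambda) - C r^{31/16}$ for a constant $C$ independent of $r$ (once $r > r_*$). Suppose $r \ge r_j$. Since $31/16 < 2 - \delta$ for $\delta < \frac{1}{16}$, for $j$ (hence $r_j$) sufficiently large we have $C r^{31/16} < r^{2-\delta}$ for all $r \ge r_j$, so $gr(A,0) > \frac{1}{16\pi^2}r^2\op{vol}(Y,\lambda) - r^{2-\delta} \ge \frac{1}{16\pi^2}r_j^2\op{vol}(Y,\lambda) - r_j^{2-\delta} = j > -j$ (using that $x \mapsto \frac{1}{16\pi^2}x^2\op{vol}(Y,\lambda) - x^{2-\delta}$ is increasing for $x$ large). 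Thus no reducible generator at parameter $r \ge r_j$ has grading $\ge -j$; equivalently every $r$ with a reducible generator of grading $\ge -j$ satisfies $r < r_j$, so $s_j \le r_j$, and since the set of such $r$ is bounded away from $r_j$ by the strict inequality above (the estimate is uniform on $[r_j,\infty)$) we get $s_j < r_j$.

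The main obstacle I anticipate is pinning down the exact constant in the asymptotic expansion $cs(A) = -\frac{1}{4}r^2\op{vol}(Y,\lambda) + O(r)$ for reducible solutions, and making sure the error term there is genuinely lower order (linear in $r$) rather than competing with $r^{31/16}$ — this requires carefully substituting the perturbed curvature equation into \eqref{eqn:cs}, expanding the wedge product, and discarding cross terms involving $d\mu$, ${*}\bar\omega$, and the fixed reference connection $A_E$, each of which contributes at most $O(r)$. A secondary point requiring care is that \S\ref{sec:grading} only directly gives $gr \le gr(A,0)$ for the perturbed reducible generators, so one must invoke the sharper statement from \cite[\S3]{tw} (or re-derive it) that the gradings of reducible generators are also bounded \emph{below} by $gr(A,0)$ minus a controlled error coming from the Morse function $f$ and the Dirac eigenvalue shifts — these corrections are $O(1)$ and $O(r)$ respectively and do not affect the leading $r^2$ term. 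Everything else is bookkeeping with the elementary monotonicity of the function defining $r_j$ in \eqref{eqn:rj} and the inequality $31/16 < 2 - \delta$.
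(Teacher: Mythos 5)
Your overall strategy is the right one and matches the paper's: compute the Chern--Simons functional on reducible solutions, feed it into Proposition~\ref{prop:sfcs}, and compare with the defining equation \eqref{eqn:rj} of $r_j$, using $31/16 < 2-\delta$. However, there is a sign error in the key computation that inverts the entire argument. For the reducible solution $A = A_0 - \frac{1}{2}ir\lambda$ (with $F_{A_0}=id\mu+i{*}\bar\omega$), the leading term of \eqref{eqn:cs} is
\[
-\left(-\tfrac{ir}{2}\lambda\right)\wedge\left(-\tfrac{ir}{2}d\lambda\right) = -\tfrac{(ir)^2}{4}\,\lambda\wedge d\lambda = +\tfrac{r^2}{4}\,\lambda\wedge d\lambda,
\]
so $cs(A) = +\frac{1}{4}r^2\op{vol}(Y,\lambda)+O(r)$, not $-\frac{1}{4}r^2\op{vol}(Y,\lambda)+O(r)$ as you wrote (you dropped the $i^2=-1$). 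With the correct sign, Proposition~\ref{prop:sfcs} forces $gr(A,0)\approx -\frac{1}{16\pi^2}r^2\op{vol}(Y,\lambda)\to-\infty$, which is exactly why reducibles eventually leave any fixed grading range $\ge -j$; with your sign the gradings would tend to $+\infty$, every reducible would have grading $\ge -j$ for large $r$, and the lemma would be \emph{false} ($s_j=\infty$). This error surfaces as an outright non sequitur in your final step: you derive $gr(A,0) > j > -j$ and then conclude that ``no reducible generator at parameter $r\ge r_j$ has grading $\ge -j$'' --- but $gr(A,0)>-j$ means the grading \emph{is} at least $-j$.

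Two smaller points. First, your ``secondary point'' about needing a \emph{lower} bound on the gradings of the perturbed reducible generators is an artifact of the sign error: with the correct sign one needs only the upper bound $gr \le gr(A,0)$ already stated in \S\ref{sec:grading}, combined with Proposition~\ref{prop:sfcs} applied to $(A,0)$, to run the contradiction (assume a generator has grading $\ge -j$, deduce $gr(A,0)\ge -j$, deduce $gr(A,0)+\frac{1}{4\pi^2}cs(A)\ge r^{2-\delta}+O(r)$, contradict \eqref{eqn:speceq}). Second, the curvature equation pins down $F_A$ but not $A$ itself, and $cs$ depends on $A$, so you should note (as the paper does) that all reducibles differ from $A_r^{red}$ by a closed $1$-form and that $cs$ is unchanged under such a shift; this uses the torsion hypothesis on $\frak{s}_\xi+\op{PD}(\Gamma)$. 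Once the sign is corrected and the final step rewritten as a contradiction argument, your proof becomes essentially the paper's.
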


\begin{proof}
Let $A_0$ be a connection on $E$ with $F_{A_0} = id\mu + i{*}\overline{\omega}$.
Observe that $(A_r^{red},\psi)=(A_0-\frac{1}{2}ir\lambda,0)$ is a solution to \eqref{eqn:taubessw}.  Moreover, every other reducible solution is given by $(A,0)$, where $A=A_r^{red}+\alpha$ for a closed $1$-form $\alpha$.  It follows from \eqref{eqn:cs} that
\begin{equation}
\label{eqn:keycomputation1}
cs(A) = cs(A_r^{red})
\end{equation}
and
\begin{equation}
\label{eqn:keycomputation2}
cs(A_r^{red})=\frac{1}{4}r^2\vol+O(r).
\end{equation}
Now suppose that $j$ is sufficiently large that $r_j>r_*$ where $r_*$ is the constant in Proposition~\ref{prop:sfcs}. Suppose that $r>r_j$ and that $(A,0)$ is a chain complex generator with $gr(A,0)\ge -j$.
Then equation \eqref{eqn:rj} implies that
\[
gr(A,0) \ge -\frac{1}{16\pi^2}r^2\op{vol}(Y,\lambda) + r^{2-\delta}.
\]
Combining this with \eqref{eqn:keycomputation1} and \eqref{eqn:keycomputation2} gives
\[
gr(A,0) + \frac{1}{4\pi^2}cs(A) \ge r^{2-\delta} + O(r).
\]
This contradicts Proposition~\ref{prop:sfcs} if $r$ is sufficiently large, which is the case if $j$ is sufficiently large.
\end{proof}

The second essential way that volume enters into the proof of Proposition~\ref{prop:up} is via the following a priori upper bound on the energy:

\begin{lemma}
\label{lem:energybound}
There is an $r$-independent constant $C$ such that any solution $(A,\psi)$ to \eqref{eqn:taubessw} satisfies
\begin{equation}
\label{eqn:energybound}
\E(A) \le \frac{r}{2}\op{vol}(Y,\lambda)+C.
\end{equation}
\end{lemma}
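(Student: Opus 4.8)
The plan is to extract the energy bound directly from the first Seiberg--Witten equation in \eqref{eqn:taubessw}, the curvature equation, by pairing it with $\lambda$ and integrating over $Y$. Recall $\E(A) = i\int_Y \lambda\wedge F_A$. Since $F_A$ is an imaginary-valued $2$-form, it is convenient to rewrite $\E(A)$ using the Hodge star: because $Y$ is three-dimensional and $|\lambda|=1$ by \eqref{eqn:metric}, we have $\int_Y \lambda\wedge F_A = \int_Y \langle {*}F_A, \lambda\rangle\, \op{dvol}$ up to a sign fixed by orientation conventions, so that $\E(A) = i\int_Y \langle {*}F_A,\lambda\rangle\,\op{dvol}$. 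Now substitute the first equation of \eqref{eqn:taubessw}, namely ${*}F_A = r(\langle cl(\cdot)\psi,\psi\rangle - i\lambda) + i({*}d\mu + \bar\omega)$. The term $-ir\lambda$ paired with $i\lambda$ contributes $r\int_Y |\lambda|^2\,\op{dvol} = r\,\op{vol}(Y,\lambda)$; note $\int_Y\op{dvol} = \int_Y\lambda\wedge d\lambda = \op{vol}(Y,\lambda)$ since ${*}d\lambda = 2\lambda$ forces $\op{dvol} = \tfrac12 \lambda\wedge{*}(2\lambda) \cdot(\dots)$ — I would check the precise constant, but the factor $\tfrac12$ in \eqref{eqn:Jg} is exactly what converts $r\,\op{vol}$ into the $\tfrac r2\op{vol}(Y,\lambda)$ appearing in \eqref{eqn:energybound}. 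The perturbation term $i({*}d\mu+\bar\omega)$ is $r$-independent and bounded, contributing to the constant $C$; here one uses that $\mu$ is a fixed $1$-form and $\bar\omega$ is a fixed harmonic form, so their $L^1$-norms against $\lambda$ are bounded by something depending only on $(Y,\lambda,J,\mu)$ but not on $r$ or on the solution.

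The one term that requires an estimate rather than a bare computation is $ir\int_Y \langle cl(\lambda)\psi,\psi\rangle\,\op{dvol}$ — wait, more precisely the contribution $i\cdot r\int_Y\langle\langle cl(\cdot)\psi,\psi\rangle,\lambda\rangle\,\op{dvol}$. Using the splitting $\mathbb S = E\oplus(E\otimes\xi)$ into the $\pm i$ eigenspaces of $cl(\lambda)$, writing $\psi=(\alpha,\beta)$ accordingly, a standard computation (this is the same manipulation used throughout \cite{tw},\cite{tw1}) gives $\langle\langle cl(\cdot)\psi,\psi\rangle,\lambda\rangle = |\alpha|^2 - |\beta|^2$, which is $\le|\alpha|^2 \le |\psi|^2$. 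So the key remaining input is an $r$-independent pointwise or $L^1$ bound on $|\psi|^2$, or rather on $r\int_Y(|\alpha|^2-|\beta|^2)$. This is precisely the a priori $C^0$ estimate $|\alpha|\le 1 + O(r^{-1})$ (equivalently $|\psi|^2 \le 1 + \kappa r^{-1}$ for a constant $\kappa$), which follows from the Weitzenböck/Bochner argument applied to the second equation $D_A\psi=0$ together with the first equation, exactly as in \cite[\S2]{tw} and as recalled in the $ECH$/Seiberg--Witten literature (e.g.\ the "key estimates" in Taubes's papers). Granting that bound, $ir\int_Y(|\alpha|^2-|\beta|^2)\,\op{dvol} \le r\int_Y |\alpha|^2\,\op{dvol} \le r(1+\kappa r^{-1})\op{vol}(Y,\lambda) = r\,\op{vol}(Y,\lambda) + \kappa\,\op{vol}(Y,\lambda)$, and the last term is absorbed into $C$.

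Assembling the pieces: $\E(A) = i\int_Y\langle {*}F_A,\lambda\rangle\,\op{dvol}$ splits as the $-ir\lambda$ term, the $r\langle cl(\cdot)\psi,\psi\rangle$ term, and the perturbation term; the first gives exactly (after the conventions in \eqref{eqn:Jg}--\eqref{eqn:metric}) a contribution bounded above by $\tfrac r2\op{vol}(Y,\lambda)$ up to lower-order corrections — I would track the factor of $2$ carefully here, since that is where the stated coefficient $\tfrac12$ comes from — the second is $\le 0$ up to the $O(1)$ correction from the $C^0$ estimate on $|\psi|$, and the third is $O(1)$. Hence $\E(A)\le \tfrac r2\op{vol}(Y,\lambda) + C$ with $C$ depending only on $(Y,\lambda,J,\mu)$.

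The main obstacle is obtaining and correctly citing the $r$-independent pointwise bound on $|\psi|$: everything else is linear algebra and bookkeeping of the metric normalizations \eqref{eqn:Jg}--\eqref{eqn:metric}. That $C^0$ bound is not proved in this excerpt, so the honest move is to invoke it from Taubes's work (it is a standard consequence of the maximum principle applied to the Weitzenböck formula for $D_A^*D_A$, using that the curvature term in the Weitzenböck formula is controlled by the first Seiberg--Witten equation). A secondary, purely cosmetic subtlety is getting the sign right in $\E(A) = \pm i\int_Y\langle{*}F_A,\lambda\rangle\,\op{dvol}$; since we only want an upper bound and the $\langle cl(\cdot)\psi,\psi\rangle$ term has a sign, the orientation convention must be pinned down so that this term indeed pushes $\E(A)$ downward rather than upward. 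I expect the convention in \cite{tw} is chosen precisely so that it does.
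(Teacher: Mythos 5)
The paper's own proof of this lemma is a one-line citation of Taubes \cite[Eq.\ (2.7)]{tw1}, together with the observation that the Riemannian volume of the metric \eqref{eqn:metric} is half the contact volume \eqref{eqn:contactvolume}, which is where the coefficient $\tfrac{1}{2}$ comes from. Your strategy --- pair the curvature equation in \eqref{eqn:taubessw} with $\lambda$, integrate, and control the spinor term by Taubes's a priori estimates --- is exactly the argument behind that citation, so the route is the right one. Your bookkeeping of the volume normalization is also correct in spirit: $*\lambda = \tfrac12 d\lambda$ gives $\int_Y\op{dvol} = \tfrac12\int_Y\lambda\wedge d\lambda$, so the $-ir\lambda$ term contributes $r\op{Vol}_g(Y)=\tfrac{r}{2}\op{vol}(Y,\lambda)$, as you anticipated.

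The gap is in the spinor term, and it is not merely a matter of pinning down an orientation convention. Since $cl(\lambda)$ acts as $\pm i$ on $E$ and $E\otimes\xi$, one has $\langle cl(\lambda)\psi,\psi\rangle = i\left(|\alpha|^2-|\beta|^2\right)$, so after multiplying by the overall $i$ in $\E(A)=i\int_Y\langle{*}F_A,\lambda\rangle\op{dvol}$ this term contributes $-r\int_Y\left(|\alpha|^2-|\beta|^2\right)\op{dvol}$, giving $\E(A)=r\int_Y\left(1-|\alpha|^2+|\beta|^2\right)\op{dvol}+O(1)$. Your second paragraph takes the opposite sign and bounds the contribution by $r\int_Y|\alpha|^2\op{dvol}\approx r\op{Vol}_g(Y)$; if that were the correct sign, the result would be $\E(A)\le r\op{vol}(Y,\lambda)+C$, twice the claimed bound, and the extra term of order $r$ cannot be absorbed into $C$ (your third paragraph then asserts the term is ``$\le 0$ up to $O(1)$'', which contradicts the second). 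Moreover, even with the sign corrected, the a priori estimate you invoke is not the one that closes the argument: $1-|\alpha|^2+|\beta|^2\le 1+|\beta|^2$, so what is needed is $r\int_Y|\beta|^2\op{dvol}=O(1)$, i.e.\ Taubes's pointwise estimate $|\beta|^2\le c r^{-1}(1-|\alpha|^2)+cr^{-2}$. The bound $|\psi|^2\le 1+O(r^{-1})$ that you cite only yields $|\beta|^2\le 1+O(r^{-1})$ (since $\alpha$ vanishes along the Reeb orbits, you cannot deduce $|\beta|^2=O(r^{-1})$ from it), hence $r\int_Y|\beta|^2\op{dvol}=O(r)$, which again ruins the constant. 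So the missing ingredient is the refined a priori bound on $\beta$, not the $C^0$ bound on $|\psi|$.
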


\begin{proof}
This follows from \cite[Eq.\ (2.7)]{tw1}, which is proved using a priori estimates on solutions to the Seiberg-Witten equations. Note that there is a factor of $1/2$ in \eqref{eqn:energybound} which is not present in \cite[Eq.\ (2.7)]{tw1}. The reason is that the latter uses the Riemannian volume as defined by the metric \eqref{eqn:metric}, which is half of the contact volume 
\eqref{eqn:contactvolume} which we are using.
\end{proof}

\subsection{Max-min families}
\label{sec:maxmin}

Given a connection $A$ on $E$ and a section $\psi$ of ${\mathbb S}$, define a functional
\[
\F(A,\psi) = \frac{1}{2}(cs(A)-r\E(A))+e_\mu(A)+ r\int_Y \langle D_A\psi,\psi\rangle d\text{vol},
\]
where
\[
e_\mu(A) = i\int_Y F_A\wedge\mu.
\]
Since the spin-c structure $\frak{s}_\xi+\op{PD}(\Gamma)$ is assumed torsion, the functional ${\mathcal F}$ is gauge invariant.
The significance of the functional $\mathcal{F}$ is that the differential on the chain complex $\widehat{CM}^*$ counts solutions to abstract perturbations of the upward gradient flow equation for $\mathcal{F}$. In particular, $\mathcal{F}$ agrees with an appropriately perturbed version of the Chern-Simons-Dirac functional from \cite{km}, up to addition of an $r$-dependent constant, see \cite[Eq. (98)]{cc2}.

A key step in Taubes's proof of the Weinstein conjecture \cite{tw1} is to use a ``max-min" approach to find a sequence $(r_n,\psi_n,A_n)$, where  $r_n\to\infty$ and $(\psi_n,A_n)$ is a solution to \eqref{eqn:taubessw} for $r=r_n$ with an $n$-independent bound on the energy.  We will use a similar construction in the proof of Proposition~\ref{prop:up}.  

Specifically, fix an integer $j$, and let $s_j$ be the number from \S\ref{sec:volume}.  Let $\hat{\sigma}\in\widehat{HM}^*(Y,\frak{s}_{\xi,\Gamma})$ be a nonzero homogeneous class with grading greater than or equal to $-j$.  Fix $r > s_j$ for which the chain complex $\widehat{CM}^*(Y,\frak{s}_{\xi,\Gamma},\lambda,J,r)$ is defined. Since we are using $\Z/2$-coefficients, any cycle representing the class $\hat{\sigma}$ has the form $\eta = \Sigma_i (A_i,\psi_i)$, where the pairs $(A_i,\psi_i)$ are distinct gauge equivalence classes of solutions to \eqref{eqn:taubessw}.   Define ${\F}_{\min}(\eta)=\min_i\F(A_i,\psi_i)$, and
\[
\F_{\hat{\sigma}}(r)=\max_{[\eta]=\hat{\sigma}}\F_{\min}(\eta).
\]
 Note that it is natural to take the max-min here because the differential on the chain complex increases $\F$; compare \cite[Prop.\ 10.7]{tw}. Note also that $\F_{\hat{\sigma}}(r)$ must be finite because there are only finitely many irreducible solutions to $\eqref{eqn:taubessw}$.

The construction in \cite[\S4.e]{e1} shows that for any such class $\hat{\sigma}$, there exists a smooth family of solutions $(A_{\hat{\sigma}}(r),\psi_{\hat{\sigma}}(r))$ to \eqref{eqn:taubessw} with the same grading as $\hat{\sigma}$, defined for each $r>s_j$ for which the chain complex $\widehat{CM}^*$ is defined, such that $\F_{\hat{\sigma}}(r)=\F(A_{\hat{\sigma}}(r),\psi_{\hat{\sigma}}(r))$. 
This family is smooth for each interval on which it is defined, but may not extend continuously over those values of $r$ for which the chain complex $\widehat{CM}^*$ is not defined.
 Call the family $(A_{\hat{\sigma}}(r),\psi_{\hat{\sigma}}(r))_{r>s_j}$ a {\em max-min family} for $\hat{\sigma}$.  Given such a max-min family, if $r>s_j$ is such that $\widehat{CM}^*$ is defined, define $\E_{\hat{\sigma}}(r)=\E(A_{\hat{\sigma}}(r),\psi_{\hat{\sigma}}(r))$. 

\begin{lemma}
\label{lem:fam}
\begin{itemize}
 \item[(a)] $\F_{\hat{\sigma}}(r)$ extends to a continuous and piecewise smooth function of $r\in(s_j,\infty)$.
\item[(b)] $\dfrac{d}{dr}\F_{\hat{\sigma}}(r) = -\dfrac{1}{2}\E_{\hat{\sigma}}(r)$ for all $r>s_j$ such that $\widehat{CM}^*$ is defined. 
\end{itemize}
\begin{proof}
$(a)$ follows from \cite[Prop.\ 4.7]{e1}; and $(b)$ follows from \cite[Eq.\ (4.6)]{tw1}, see also \cite[Lem.\ 10.8]{tw}. 
\end{proof}    
\end{lemma}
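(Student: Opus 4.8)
I would prove part (b) first as a direct chain-rule computation for the functional $\F$, and then deduce part (a) by combining (b) with the a priori energy bound and a compactness argument at the ``bad'' values of $r$.

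\textbf{Part (b).} On any open interval $(a,b)\subseteq(s_j,\infty)$ on which $\widehat{CM}^*$ is defined, the construction of \cite[\S4.e]{e1} gives a smooth family $r\mapsto(A_{\hat{\sigma}}(r),\psi_{\hat{\sigma}}(r))$, so $\F_{\hat{\sigma}}(r)=\F(A_{\hat{\sigma}}(r),\psi_{\hat{\sigma}}(r))$ is a smooth function of $r$ there. Write $\F=\F(A,\psi;r)$ to display the explicit $r$-dependence. Differentiating $\F_{\hat{\sigma}}(r)$ by the chain rule produces two terms: the first-order variation of $\F(\cdot\,;r)$ in the direction $\frac{d}{dr}(A_{\hat{\sigma}}(r),\psi_{\hat{\sigma}}(r))$, plus the explicit partial derivative $\frac{\partial\F}{\partial r}$ evaluated at $(A_{\hat{\sigma}}(r),\psi_{\hat{\sigma}}(r))$. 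The first term vanishes because $\F$ is gauge invariant and, being a perturbed Chern--Simons--Dirac functional up to an $r$-dependent constant, has the solutions of \eqref{eqn:taubessw} as its critical points at fixed $r$; hence $(A_{\hat{\sigma}}(r),\psi_{\hat{\sigma}}(r))$ is a critical point of $\F(\cdot\,;r)$ and every first-order variation at it vanishes. For the second term, the formula for $\F$ in \S\ref{sec:maxmin} gives
\[
\frac{\partial\F}{\partial r}(A,\psi) = -\tfrac12\E(A) + \int_Y \langle D_A\psi,\psi\rangle\,d\text{vol},
\]
whose last term vanishes since $D_{A_{\hat{\sigma}}(r)}\psi_{\hat{\sigma}}(r)=0$. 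Therefore $\frac{d}{dr}\F_{\hat{\sigma}}(r) = -\tfrac12\E(A_{\hat{\sigma}}(r)) = -\tfrac12\E_{\hat{\sigma}}(r)$, which is (b).

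\textbf{Part (a).} On each maximal interval $(a,b)\subseteq(s_j,\infty)$ on which $\widehat{CM}^*$ is defined, $\F_{\hat{\sigma}}$ is smooth by the above. Combining part (b) with the a priori energy bound of Lemma~\ref{lem:energybound} (together with standard lower bounds on the energy of irreducible solutions) shows that $\frac{d}{dr}\F_{\hat{\sigma}}$ is bounded on bounded subintervals, so $\F_{\hat{\sigma}}$ is locally Lipschitz and extends continuously to the closure $[a,b]$. The remaining point is that the resulting one-sided limits agree at each bad value $r_0\in(s_j,\infty)$, i.e.\ that the max-min value does not jump where the chain complex degenerates. I would prove this by compactness: given $r_n\to r_0$ with $\widehat{CM}^*$ defined at each $r_n$, the uniform energy bound of Lemma~\ref{lem:energybound} and Taubes's a priori estimates for solutions of \eqref{eqn:taubessw} yield a subsequential limit $(A_\infty,\psi_\infty)$ solving \eqref{eqn:taubessw} at $r=r_0$, with $\F(A_{\hat{\sigma}}(r_n),\psi_{\hat{\sigma}}(r_n))\to\F(A_\infty,\psi_\infty)$; one then checks that such a limit realizes the max-min over cycles representing the fixed class $\hat{\sigma}$ at $r=r_0$ whether $r_n\uparrow r_0$ or $r_n\downarrow r_0$, using that the chain complexes for nearby values of $r$ are related by filtration-respecting continuation maps. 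This forces the two one-sided limits to coincide. This last part is essentially \cite[Prop.\ 4.7]{e1}.

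\textbf{Expected main obstacle.} Part (b) is a routine computation once $\F$ is recognized as a perturbed Chern--Simons--Dirac functional, so that the moving-solution term drops out and only the explicit $r$-derivative survives. The genuine difficulty is the continuity assertion in part (a) across the discrete set of $r$ at which $\widehat{CM}^*$ is undefined: one must control the behaviour of the \emph{entire} family of Seiberg--Witten solutions, not merely a single continuation, as $r$ crosses such a value, and it is precisely here that the uniform energy bound of Lemma~\ref{lem:energybound} and Taubes's compactness analysis are indispensable.
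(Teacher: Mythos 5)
Your proposal is correct and follows essentially the same route as the paper, which simply delegates part (b) to the first-variation/envelope computation in \cite[Eq.\ (4.6)]{tw1} (the moving-solution term vanishing because the max-min configuration is a critical point of $\F(\cdot\,;r)$ and $D_A\psi=0$ kills the remaining explicit term) and part (a) to the continuity statement \cite[Prop.\ 4.7]{e1}. You have merely unpacked the content of those citations, correctly identifying that the only genuinely delicate point is the matching of one-sided limits of $\F_{\hat{\sigma}}$ across the discrete set where $\widehat{CM}^*$ is undefined, which is exactly what \cite[Prop.\ 4.7]{e1} supplies.
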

\noindent
In particular, $\E_{\hat{\sigma}}(r)$ does not depend on the choice of max-min family.

\subsection{Max-min energy and min-max symplectic action}
\label{sec:mm}

The numbers $\E_{\hat{\sigma}}(r)$ from \S\ref{sec:maxmin} are related to the numbers $c_\sigma(Y,\lambda)$ from \S\ref{sec:capacities} as follows:

\begin{proposition}
\label{prop:energyaction}  
Let $\sigma$ be a nonzero homogeneous class in $ECH(Y,\xi,\Gamma)$, and let $\hat{\sigma}\in \widehat{HM}^*(Y,\frak{s}_{\xi,\Gamma})$ denote the class corresponding to $\sigma$ under the isomorphism \eqref{eqn:taubes}.  Then
\[
\lim_{r \to \infty} \E_{\hat{\sigma}}(r) = 2\pi c_{\sigma}(Y,\lambda).
\]
\end{proposition}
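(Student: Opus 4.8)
The plan is to relate $\E_{\hat\sigma}(r)$ to the symplectic action threshold $c_\sigma(Y,\lambda)$ by comparing two filtrations: the energy filtration on Seiberg--Witten Floer cohomology and the symplectic action filtration on ECH, using the $L$-flat isomorphism \eqref{eqn:cci} as the bridge. The key observation is that $\F_{\hat\sigma}$ is concave-like: by Lemma~\ref{lem:fam}(b) its derivative is $-\tfrac12\E_{\hat\sigma}(r)$, and the energy bound in Lemma~\ref{lem:energybound} combined with the reducibles bound in Lemma~\ref{lem:red} shows that $\E_{\hat\sigma}(r)$ stays in a controlled range. The first step is therefore to show that the limit $\lim_{r\to\infty}\E_{\hat\sigma}(r)$ exists. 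I would argue that $\E_{\hat\sigma}(r)$ is essentially monotone (or at least has bounded variation) on intervals where $\widehat{CM}^*$ is defined: differentiating $\E$ along the max-min family and using that $(A_{\hat\sigma}(r),\psi_{\hat\sigma}(r))$ solves \eqref{eqn:taubessw} should give a sign for $d\E_{\hat\sigma}/dr$, or one combines the a priori bound $\E_{\hat\sigma}(r)\le \tfrac r2\op{vol}(Y,\lambda)+C$ with Lemma~\ref{lem:red} (the family has grading $\ge -j$, so for $r$ large it is irreducible and its energy cannot be too small) to pin down the limit.

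The heart of the argument is identifying this limit with $2\pi c_\sigma(Y,\lambda)$. For the upper bound $\lim_r \E_{\hat\sigma}(r)\le 2\pi c_\sigma(Y,\lambda)$: pick $L>c_\sigma(Y,\lambda)$, so $\sigma$ lifts to a class in $ECH^L_*(Y,\lambda,\Gamma)$; applying a generic $L$-flat approximation $(\lambda_1,J_1)$ and the isomorphism \eqref{eqn:Lflat}, the class $\hat\sigma$ lifts through \eqref{eqn:iim1} to $\widehat{HM}^{-*}_L$, i.e. it is represented by a cycle whose generators all have energy less than $2\pi L$. Feeding such a cycle $\eta$ into the max-min defining $\F_{\hat\sigma}$ and using the known relationship between $\F$ and $\E$ on such low-energy generators (the Chern--Simons term is lower order, so $\F\approx -\tfrac r2\E$ up to $O(1)$ plus the $e_\mu$ term), one bounds $\F_{\hat\sigma}(r)$ from below, hence $\E_{\hat\sigma}(r)\le 2\pi L + o(r)/r \cdot(\dots)$; more cleanly, one integrates Lemma~\ref{lem:fam}(b) and uses that $\F_{\hat\sigma}$ for the low-energy cycle forces $\E_{\hat\sigma}(r)\le 2\pi L+ (\text{lower order})$, then lets $L\downarrow c_\sigma(Y,\lambda)$. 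For the reverse inequality, suppose $\lim_r\E_{\hat\sigma}(r)=2\pi L_0$ with $L_0<c_\sigma(Y,\lambda)$; then for large $r$ the max-min family has energy less than $2\pi L$ for some $L<c_\sigma(Y,\lambda)$ with no ECH generator of action exactly $L$, so under \eqref{eqn:cci}--\eqref{eqn:Lflat} it represents a class in $\widehat{HM}^{-*}_L$ mapping to $\hat\sigma$, whence $c_\sigma(Y,\lambda)\le L$, a contradiction. One must be careful that the max-min family lives at a fixed metric $(\lambda,J)$ while \eqref{eqn:cci} uses the $L$-flat approximation $(\lambda_1,J_1)$; I would handle this by invoking the continuity of $c_\sigma$ under $C^0$-small changes of contact form (as in the reduction to the nondegenerate case at the start of \S\ref{sec:ub}) together with the fact that the $L$-flat modification is supported near short Reeb orbits and changes energies by an arbitrarily small amount.

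The main obstacle I anticipate is the interchange of limits and filtrations across the two theories: \eqref{eqn:cci} is only an isomorphism for $r$ sufficiently large \emph{depending on $L$}, and the $L$-flat approximation depends on $L$, so one cannot fix a single $(\lambda_1,J_1)$ and let both $L$ and $r$ go to infinity. The resolution is to work with one $L$ at a time (with $L$ slightly above or below $c_\sigma(Y,\lambda)$, avoiding the discrete set of ECH action values), establish the desired inequality for that $L$ and all $r$ beyond the corresponding threshold, and only at the very end take $L\to c_\sigma(Y,\lambda)$. A secondary technical point is controlling the $e_\mu(A)$ and Chern--Simons contributions to $\F$ on the max-min family; these are $O(r)$ and $O(r^2)$ respectively in general, but on generators of energy $O(1)$ (bounded independently of $r$) Taubes's estimates from \cite{tw1} show $cs(A)=O(1)$, so that $\F(A,\psi)= -\tfrac r2\E(A)+O(1)$, which is exactly what makes the comparison $\E_{\hat\sigma}(r)\to 2\pi c_\sigma(Y,\lambda)$ go through.
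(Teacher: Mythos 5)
Your proposal follows essentially the same route as the paper's proof: the paper introduces intermediate thresholds $f_1(r)$ (via the $L$-flat isomorphism \eqref{eqn:Lflat} and the inclusion \eqref{eqn:iim1}) and $f(r)$ (via the energy filtration for the original $(\lambda,J)$, compared to $f_1$ by \cite[Lem.~3.4(c)]{cc2}, which plays the role of your continuity argument for the $L$-flat approximation), and then squeezes $\E_{\hat{\sigma}}(r)$ against $2\pi f(r)$ using $-2\F/r=\E-cs/r-2e_\mu/r$ together with $|cs|\le r^{1-\gamma}\E$ (Lemma~\ref{lem:csbound}) and $|e_\mu|\le\kappa\E$, working at one $L$ at a time exactly as you describe. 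Two minor points: your claim that $cs(A)=O(1)$ on bounded-energy generators overstates Taubes's estimate, which gives only $|cs(A)|\le cr^{2/3}\E(A)^{4/3}=O(r^{2/3})$ (still $o(r)$, which is all that is needed), and your opening worry about proving monotonicity of $\E_{\hat{\sigma}}(r)$ is unnecessary, since the two-sided squeeze already yields existence of the limit.
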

\noindent
Here, and in similar limits below, it is understood that the limit is over $r$ such that the chain complex $\widehat{CM}^*$ is defined.

The proof of Proposition~\ref{prop:energyaction} requires two preliminary lemmas which will also be needed later. To state the first lemma, recall from \cite[Prop.\ 2.8]{tw2} that in the case $\Gamma=0$, if $r$ is sufficiently large then there is a unique (up to gauge equivalence) ``trivial'' solution $(A_{triv},\psi_{triv})$ to \eqref{eqn:taubessw} such that $1-|\psi|<1/2$ on all of $Y$. If $(\lambda,J)$ is $L$-flat with $L>0$, then $(A_{triv},\psi_{triv})$ corresponds to the empty set of Reeb orbits under the isomorphism \eqref{eqn:cci} with $\Gamma=0$, see the beginning of \cite[\S3]{e2}.  Any solution not gauge equivalent to $(A_{triv},\psi_{triv})$ will be called ``nontrivial''. Let $L_0$ denote one half the minimum symplectic action of a Reeb orbit.

\begin{lemma}
\label{lem:elb}
There exists an $r$-independent constant $c$ such that if $r$ is sufficiently large, then every nontrivial solution $(A,\psi)$ to \eqref{eqn:taubessw} satisfies $\E(A)>2\pi L_0$ and
\begin{equation}
\label{eqn:elb}
 |cs(A)| \le cr^{2/3}\E(A)^{4/3}.
\end{equation}
\end{lemma}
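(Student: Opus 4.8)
The plan is to obtain both assertions as consequences of Taubes's a priori estimates for solutions of \eqref{eqn:taubessw}, combined with the definition of the energy and the Chern--Simons functional. First I would recall the basic pointwise estimates from \cite{tw1}: for $r$ large, any solution $(A,\psi)$ with $\psi=(\alpha,\beta)$ (in the splitting ${\mathbb S}=E\oplus(E\otimes\xi)$) satisfies $|\alpha|\le 1+o(1)$, $|\beta|^2\le cr^{-1}(1-|\alpha|^2)+cr^{-2}$, and the curvature bound $|F_A|\le cr$ away from the zero set of $\alpha$, with the $E$-curvature concentrated near $\alpha^{-1}(0)$. The key identity, also from \cite[\S2]{tw1}, is that $i\int_Y \lambda\wedge F_A = \E(A)$ can be rewritten (using the first Seiberg--Witten equation $*F_A = r(\langle cl(\cdot)\psi,\psi\rangle - i\lambda)+i(*d\mu+\bar\omega)$) as an integral of $r(1-|\alpha|^2)$ over $Y$ plus lower-order terms; in particular $\E(A)\ge 0$ up to an $r$-independent error, and $\E(A)$ controls $\int_Y r(1-|\alpha|^2)$.

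For the lower bound $\E(A)>2\pi L_0$ on nontrivial solutions, the idea is that a nontrivial solution has $\alpha$ vanishing somewhere, and by the structure theorem its zero set is $C^0$-close to a nonempty union of Reeb orbits carrying an honest homology class $\Gamma$; when $\Gamma\ne 0$ this is automatic, and when $\Gamma=0$ nontriviality forces $\alpha^{-1}(0)\ne\emptyset$ because the only solution with $1-|\psi|<1/2$ everywhere is $(A_{triv},\psi_{triv})$ by \cite[Prop.~2.8]{tw2}. The energy of such a solution is then approximately $2\pi$ times the total symplectic action of the associated Reeb orbits (this is exactly the "energy $\approx 2\pi\A(\alpha)$" statement reviewed around \eqref{eqn:cci}), which is at least $2\pi$ times the minimal Reeb action, hence at least $2\pi\cdot 2L_0 > 2\pi L_0$; the factor-of-two slack absorbs the $o(1)$ error from the approximation, so for $r$ large the strict inequality $\E(A)>2\pi L_0$ holds. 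I would phrase this carefully using the cited estimates rather than re-deriving the concentration result.

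For the Chern--Simons bound \eqref{eqn:elb}, I would start from the definition $cs(A)=-\int_Y(A-A_E)\wedge(F_A+F_{A_E}-2i*\bar\omega)$ and estimate it in terms of the $L^2$-norm of $F_A$ and the geometry of the region where the curvature concentrates. Writing $A-A_E$ in a suitable gauge, the dominant contribution comes from a tubular neighborhood of $\alpha^{-1}(0)$ of radius $\sim r^{-1/2}$, on which $|F_A|\lesssim r$; crude estimates would give $|cs(A)|\lesssim r\cdot(\text{length of orbits})\cdot\log r$ or similar, which is not good enough, so the point is to use the sharper interpolation inequality from \cite{tw1} relating $\int_Y |F_A|^2$ to the energy. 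Concretely, Taubes establishes a bound of the shape $\int_Y|F_A|\wedge * |F_A| \lesssim r^{1/3}\E(A)^{4/3}$ for nontrivial solutions (this is precisely the kind of estimate in \cite[\S2]{tw1}, exploiting that where $1-|\alpha|^2$ is small the curvature is small and where it is not small the volume of that region is controlled by $\E(A)/r$), and $cs(A)$ is bounded by $\|A-A_E\|_{L^2}\cdot\|F_A\|_{L^2}$; combined with an $L^2$ bound on $A-A_E$ of order $r^{-1/6}\E(A)^{2/3}$ in the appropriate gauge, this yields $|cs(A)|\le cr^{2/3}\E(A)^{4/3}$.

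The main obstacle is the second estimate: getting the precise exponents $r^{2/3}\E^{4/3}$ rather than something weaker requires invoking Taubes's refined analysis of the curvature concentration and the gauge-fixing for $A-A_E$, and one must be careful that the $1$-form $\mu$ and the term $2i*\bar\omega$ contribute only lower-order corrections absorbed into the stated inequality for $r$ large. I expect the cleanest route is to cite the relevant inequalities from \cite{tw1} (and possibly \cite{tw2}) essentially verbatim, tracking constants only enough to see that the exponents come out as claimed, rather than reproving the a priori estimates here.
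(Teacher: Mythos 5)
Your overall route---reduce both assertions to Taubes's a priori estimates in \cite{tw1} and cite them---is the same as the paper's, and your argument for the energy lower bound is essentially fine: the paper packages it slightly differently, observing that $(\lambda,J)$ is automatically $L_0$-flat (there are no nonempty orbit sets of action less than $2L_0$, so $ECC^{L_0}$ has at most the empty set as generator), whence for $r$ large the bijection on generators underlying \eqref{eqn:cci} identifies every solution of energy below $2\pi L_0$ with the trivial solution. Your version via the concentration of $\alpha^{-1}(0)$ near a nonempty orbit set of action at least $2L_0$ rests on the same underlying facts, with a comfortable factor of two of slack to absorb the approximation error.

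There are two concrete problems with your Chern--Simons half. First, the mechanism you sketch does not produce the stated exponents: a single Cauchy--Schwarz $|cs(A)|\le\|A-A_E\|_{L^2}\|F_A\|_{L^2}$ with the bounds you posit ($\|A-A_E\|_{L^2}\lesssim r^{-1/6}\E(A)^{2/3}$ and $\|F_A\|_{L^2}^2\lesssim r^{1/3}\E(A)^{4/3}$) multiplies out to $\E(A)^{4/3}$ with no factor of $r^{2/3}$, which is not what Taubes proves and is surely false. The actual argument behind \cite[Eq.\ (4.9)]{tw1} is a spectral decomposition of the Coulomb-gauge difference $A-A_E$ with respect to $*d$, bounding the high-eigenvalue contribution by roughly $T^{-1}\|F_A\|_{L^2}^2\lesssim T^{-1}r\,\E(A)$ and the low-eigenvalue contribution by roughly $T^2\E(A)^2$, then optimizing the cutoff $T\sim(r/\E(A))^{1/3}$; that optimization is where $r^{2/3}\E(A)^{4/3}$ comes from. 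Since you ultimately propose to quote Taubes verbatim this is survivable, but the sketch as written would not assemble into a proof. Second, and more importantly, you treat the two assertions of the lemma as independent, whereas the energy lower bound is a logical prerequisite for the Chern--Simons bound: \cite[Eq.\ (4.9)]{tw1} is proved under the hypothesis $\E(A)\ge 1$, and without \emph{some} positive lower bound on $\E(A)$ one cannot absorb the constant-size contributions of $\mu$, $\bar\omega$, and the reference connection into a pure power of $\E(A)$. The one nontrivial observation in the paper's proof is precisely that the first assertion supplies the lower bound $2\pi L_0$ under which Taubes's argument goes through unchanged; your write-up should make that dependency explicit.
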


\begin{proof}
The chain complex $ECC_*^{L_0}(Y,\lambda,\Gamma,J)$ has no generators unless $\Gamma=0$, in which case the only generator is the empty set of Reeb orbits. In particular, the pair $(\lambda,J)$ is $L_0$-flat. By \eqref{eqn:cci}, if $r$ is sufficiently large then every nontrivial solution $(A,\psi)$ to \eqref{eqn:taubessw} has $\E(A)\ge 2\pi L_0$. Given this positive lower bound on the energy, the estimate \eqref{eqn:elb} now follows as in \cite[Eq.\ (4.9)]{tw1}. Note that it is assumed there that $\E(A)\ge 1$, but the same argument works as long as there is a positive lower bound on $\E(A)$.
\end{proof}

Now fix a positive number $\gamma$ such that $\gamma<\delta/4$.

\begin{lemma}
\label{lem:csbound}
For every integer $j$ there exists $\rho \ge 0$ such if $r\ge \rho$ is such that the chain complex $\widehat{CM}^*$ is defined, and if $(A,\psi)$ is a nontrivial irreducible solution to \eqref{eqn:taubessw} of grading $-j$,  then
\begin{equation}
\label{eqn:csbound}
|cs(A)|\le r^{1-\gamma}\E(A).
\end{equation}
\end{lemma}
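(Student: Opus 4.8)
The plan is to dichotomize on the size of $\E(A)$, using the Chern--Simons estimate of Lemma~\ref{lem:elb} when the energy is small and the spectral flow estimate of Proposition~\ref{prop:sfcs} when the energy is large. Let $c$ be the constant from Lemma~\ref{lem:elb}, and split according to whether $\E(A) \le c^{-3} r^{1-3\gamma}$ or $\E(A) > c^{-3} r^{1-3\gamma}$.

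In the small-energy case, Lemma~\ref{lem:elb} (valid once $r$ is large) gives
\[
|cs(A)| \le c r^{2/3} \E(A)^{4/3} = \big(c r^{2/3} \E(A)^{1/3}\big)\, \E(A),
\]
and since $\E(A) \le c^{-3}r^{1-3\gamma}$ forces $\E(A)^{1/3} \le c^{-1} r^{(1-3\gamma)/3}$, the prefactor is at most $r^{2/3 + (1-3\gamma)/3} = r^{1-\gamma}$, which is exactly the claimed bound. Note this is what pins down the exponent in the threshold: one needs $2/3 + \frac13(1-3\gamma) = 1-\gamma$, and the constant $c^{-3}$ is chosen so that the powers of $c$ cancel. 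This case requires only that $r$ be large enough for Lemma~\ref{lem:elb} to apply.

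In the large-energy case, the target $r^{1-\gamma}\E(A)$ exceeds $c^{-3} r^{2-4\gamma}$, so it suffices to bound $|cs(A)|$ by $c^{-3} r^{2-4\gamma}$. Proposition~\ref{prop:sfcs} gives $|cs(A)| \le 4\pi^2|j| + 4\pi^2 K r^{31/16}$ for $r > r_*$, using $gr(A,\psi)=-j$. The numerical point is that $2-4\gamma > 31/16$: since $\gamma < \delta/4$ and $\delta < 1/16$ we have $4\gamma < 1/16$, hence $2-4\gamma > 2 - 1/16 = 31/16$. Therefore $r^{2-4\gamma}$ eventually dominates both the constant $4\pi^2|j|$ and $4\pi^2 K r^{31/16}$, so there is a $\rho$, depending on $j$ (and on $K$, $c$, $\delta$, $\gamma$), with $4\pi^2|j| + 4\pi^2 K r^{31/16} \le c^{-3} r^{2-4\gamma}$ for all $r \ge \rho$. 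Enlarging $\rho$ so that it also exceeds $r_*$ and the threshold needed for Lemma~\ref{lem:elb} finishes the argument.

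The only real obstacle is the bookkeeping of exponents: the threshold exponent $1-3\gamma$ must be small enough for Lemma~\ref{lem:elb} to close the first case --- which determines it via $2/3 + \frac13(1-3\gamma) = 1-\gamma$ --- yet large enough that, after multiplication by $r^{1-\gamma}$, the resulting exponent $2-4\gamma$ beats the $r^{31/16}$ error term in the spectral flow/Chern--Simons comparison. The standing assumptions $\gamma < \delta/4$ and $\delta < 1/16$ are precisely what make these two requirements compatible; beyond the estimates already quoted there is no analytic content.
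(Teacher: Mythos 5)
Your proof is correct and is essentially the paper's argument: the paper runs the same computation as a proof by contradiction (assuming $|cs(A)|>r^{1-\gamma}\E(A)$, deducing $\E(A)>c^{-3}r^{1-3\gamma}$ from Lemma~\ref{lem:elb} and hence $|cs(A)|>c^{-3}r^{2-4\gamma}$, contradicting Proposition~\ref{prop:sfcs}), which is just the contrapositive of your dichotomy on $\E(A)$ at the threshold $c^{-3}r^{1-3\gamma}$. Your explicit check that $2-4\gamma>31/16$ via $\gamma<\delta/4<1/64$ is the same numerical point the paper invokes with ``since $\delta>4\gamma$.''
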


\begin{proof}
Fix $j$. Let $(A,\psi)$ be a nontrivial solution to \eqref{eqn:taubessw} of grading $-j$ with
 \begin{equation}
\label{eqn:hyp}
|cs(A)|>r^{1-\gamma}\E(A).
\end{equation}
By Lemma~\ref{lem:elb}, if $r$ is sufficiently large then
\begin{equation}
\label{eqn:csn}
|cs(A)| \le cr^{2/3}\E(A)^{4/3}.
\end{equation}
Combining \eqref{eqn:hyp} with \eqref{eqn:csn}, we conclude that $\E(A)\ge c^{-3}r^{1-3\gamma}$. Using \eqref{eqn:hyp} again, it follows that
\[
|cs(A)|> c^{-3}r^{2-4\gamma}.
\]
But this contradicts Proposition~\ref{prop:sfcs} when $r$ is sufficiently large with respect to $j$, since $\delta>4\gamma$.
\end{proof}

\begin{proof}[Proof of Proposition~\ref{prop:energyaction}.]
Choose $L_0>c_\sigma(Y,\lambda)$ and let $(\lambda_1,J_1)$ be an $L_0$-flat approximation to $(\lambda,J)$. For $r$ large,
define $f_1(r)$ to be the infimum over $L$ such that the class $\hat{\sigma}$ is in the image of the map \eqref{eqn:iim1}. We first claim that
\begin{equation}
\label{eqn:limit1}
\lim_{r\to\infty}\left(f_1(r)-c_\sigma(Y,\lambda)\right) = 0.
\end{equation}
This holds because for every $L\le L_0$ which is not the symplectic action of an ECH generator, in particular $L\neq c_\sigma(Y,\lambda)$, if $r$ is sufficiently large that the isomorphism \eqref{eqn:Lflat} is defined, then the class $\hat{\sigma}$ is in the image of the map \eqref{eqn:iim1} if and only if $L>c_\sigma(Y,\lambda)$.

Next define $f(r)$ for $r$ large to be the infimum over $L$ such that the class $\hat{\sigma}$ is in the image of the inclusion-induced map
\begin{equation}
\label{eqn:iim2}
\widehat{HM}^*_L(Y,\frak{s}_{\xi,\Gamma},\lambda,J,r)\to \widehat{HM}^*(Y,\frak{s}_{\xi,\Gamma}).
\end{equation}
It follows from \cite[Lem.\ 3.4(c)]{cc2} that
\begin{equation}
\label{eqn:limit2}
\lim_{r\to\infty}\left(f(r)-f_1(r)\right)=0.
\end{equation}

By \eqref{eqn:limit1} and \eqref{eqn:limit2}, to complete the proof of Proposition~\ref{prop:energyaction} it is enough to show that
\begin{equation}
\label{eqn:limit3}
\lim_{r\to\infty}\left(\E_{\hat{\sigma}}(r) - 2\pi f(r)\right)=0.
\end{equation}

To prepare for the proof of \eqref{eqn:limit3}, assume that $r$ is sufficiently large so that Lemma~\ref{lem:elb} is applicable and Lemma~\ref{lem:csbound} is applicable to $j=-gr(\hat{\sigma})$. Also assume that $r$ is sufficiently large so that all nontrivial Seiberg-Witten solutions in grading $gr(\hat{\sigma})$ are irreducible and have positive energy. Let $(A,\psi)$ be a nontrivial solution in grading $gr(\hat{\sigma})$. Then
\[ 
\mathcal{F}(A,\psi)=\frac{1}{2}(cs(A)-r\E(A))+e_\mu(A).
\]
By \cite[Eq.\ (4.2)]{tw1} and Lemma~\ref{lem:elb}, we have
\begin{equation}
\label{eqn:emubound}
|e_\mu(A)| \le \kappa \E(A)
\end{equation}
where $\kappa$ is an $r$-independent constant.
The above and Lemma~\ref{lem:csbound} imply that
\begin{equation}
\label{eqn:EF1}
(1-r^{-\gamma}-2\kappa r^{-1})\E(A) \le \frac{-2}{r}\mathcal{F}(A,\psi) \le (1+r^{-\gamma}+2\kappa r^{-1})\E(A).
\end{equation}
Also, it follows from the construction of the trivial solution in \cite{tw2} that
\begin{equation}
\label{eqn:EF2} \lim_{r\to\infty}\E(A_{triv})=\lim_{r\to\infty}\frac{\mathcal{F}(A_{triv},\psi_{triv})}{r} = 0.
\end{equation}

Now \eqref{eqn:limit3} can be deduced easily from \eqref{eqn:EF1} and \eqref{eqn:EF2}. The details are as follows. Fix $\epsilon>0$ and suppose that $r$ is sufficiently large as in the above paragraph.
By the definition of $f(r)$, the class $\hat{\sigma}$ is in the image of the map \eqref{eqn:iim2} for $L=f(r)+\epsilon$. Also, if $r$ is sufficiently large, then by \eqref{eqn:EF1} and \eqref{eqn:EF2}, and the fact that $L$ has an upper bound when $r$ is large by \eqref{eqn:limit1} and \eqref{eqn:limit2}, if $\eta=\sum_i(A_i,\psi_i)$ is a cycle in $\widehat{CM}_L$ representing the class $\hat{\sigma}$, then $-2\mathcal{F}(A_i,\psi_i)/r<2\pi (L + \epsilon)$ for each $i$. Consequently $-2\mathcal{F}_{\hat{\sigma}}(r)/r < 2\pi (L+\epsilon)$. By \eqref{eqn:EF1} and \eqref{eqn:EF2} again, if $r$ is sufficiently large then $\E_{\hat{\sigma}}(r) < 2\pi (L+2\epsilon)$, which means that $\E_{\hat{\sigma}}(r)< f(r)+3\epsilon$.

By similar reasoning, if $\E_{\hat{\sigma}}(r)< f(r)-\epsilon$, then if $r$ is sufficiently large, the class $\hat{\sigma}$ is in the image of the map \eqref{eqn:iim2} for $L=f(r)-\epsilon/2$, which contradicts the definition of $f(r)$.
\end{proof}

\subsection{Proof of the upper bound}
\label{sec:proofup}

We are now ready to prove Proposition~\ref{prop:up}. Before giving the details, here is the rough idea of the proof. Consider an ECH class $\sigma$ of grading $j$ large, and choose the absolute gradings so that the corresponding Seiberg-Witten grading is $-j$. Consider a max-min family for the corresponding Seiberg-Witten Floer cohomology class as $r$ increases from $r_j$ to $\infty$. If $(A,\psi)$ is the element of the max-min family for $r=r_j$, then heuristically we have
\[
\frac{4\pi^2c_\sigma(Y,\lambda)^2}{j} \approx \frac{\E(A)^2}{j} \le \frac{\left(\frac{r_j}{2}\op{vol}(Y,\lambda)+C\right)^2}{\frac{1}{16\pi^2}r_j^2\op{vol}(Y,\lambda) + r_j^{2-\delta}} \approx 4\pi^2\op{vol}(Y,\lambda).
\]
The idea of the approximation on the left is that $\E(A)$ converges to $2\pi c_\sigma(Y,\lambda)$ as $r\to\infty$ by Proposition~\ref{prop:energyaction}; and since each member of the max-min family for $r>r_j$ is irreducible by Lemma~\ref{lem:red}, we can apply Proposition~\ref{prop:sfcs} and some calculations to control the change in $\E(A)$ in this family. The inequality in the middle follows from equation
\eqref{eqn:rj} and Lemma~\ref{lem:energybound}. The approximations on the left and right get better as $j$ increases because $\lim_{j\to\infty}r_j = +\infty$.

\begin{proof}[Proof of Proposition~\ref{prop:up}.] The proof has six steps.

{\em Step 1: Setup.\/}
  If $\sigma\in ECH_*(Y,\xi,\Gamma)$ is a nonzero homogeneous class, let $\hat{\sigma}\in \widehat{HM}^*(Y,\frak{s}_{\xi,\Gamma})$ denote the corresponding class in Seiberg-Witten Floer cohomology via the isomorphism \eqref{eqn:taubes}.  We can choose the absolute grading $I$ on $ECH(Y,\xi,\Gamma)$ so that the Seiberg-Witten grading of $\hat{\sigma}$ is  $-I(\sigma)$ for all $\sigma$. For Steps 1--5, fix such a class $\sigma$ and write $j=I(\sigma)$.  We will obtain an upper bound on $c_\sigma(Y,\lambda)$ in terms of $j$ when $j$ is sufficiently large, see \eqref{eqn:step6} below.

To start, we always assume that $j$ is sufficiently large so that $j>0$, the number $r_j$ defined in \eqref{eqn:rj} satisfies $r_j\ge 1$, Proposition~\ref{prop:sfcs} and Lemma~\ref{lem:elb} are applicable to $r\ge r_j$, Lemma~\ref{lem:red} is applicable so that $r_j>s_j$, and the trivial solution $(A_{triv},\psi_{triv})$ does not have grading $-j$.

Fix a max-min family $(A_{\hat{\sigma}}(r),\psi_{\hat{\sigma}}(r))$ for $\hat{\sigma}$ as in \S\ref{sec:maxmin}. Let $S$ denote the discrete set of $r$ for which the chain complex $\widehat{CM}^*$ is not defined. Recall that the max-min family is defined for $r\in(s_j,\infty)\setminus S$. For such $r$, define
\begin{align}
\nonumber
\E(r) &= \E_{\hat{\sigma}}(r)=\E(A_{\hat{\sigma}}(r)),\\
\nonumber
cs(r) &= cs(A_{\hat{\sigma}}(r)),\\
\nonumber
e_{\mu}(r)&=e_\mu(A_{\hat{\sigma}}(r)),\\
\label{eqn:vr}
v(r) &= -\frac{2\mathcal{F}_{\hat{\sigma}}(r)}{r} = \E(r) - \frac{cs(r)}{r}-\frac{2e_\mu(r)}{r}.
\end{align}
It follows from Lemma~\ref{lem:fam} that $v(r)$ extends to a continuous and piecewise smooth function of $r\in(s_j,\infty)$. However the functions $cs(r)$, $\E(r)$, and $e_\mu(r)$ might not extend continuously over $S$. For any equation below involving the latter three functions, it is implicitly assumed that $r\notin S$.

By Lemma~\ref{lem:fam}, we have
\begin{equation}
\label{eqn:vcs}
 \dfrac{dv(r)}{dr}=\frac{cs(r)}{r^2}+\frac{2e_\mu(r)}{r^2}.
\end{equation}
By Proposition~\ref{prop:sfcs} we have the key estimate
\begin{equation}
\label{eqn:jcs}
\left|-j+\frac{1}{4\pi^2}{cs(r)}\right| < Kr^{2-\delta}
\end{equation}
whenever $r\ge r_j$. Here we are using the fact that
$gr(A_{\hat{\sigma}}(r),\psi_{\hat{\sigma}}(r)) = -j$, because
$(A_{\hat{\sigma}}(r),\psi_{\hat{\sigma}}(r))$ is irreducible by Lemma~\ref{lem:red}.

Define a number $\overline{r}=\overline{r}_{\hat{\sigma}}$ as follows.  We know from Lemma~\ref{lem:csbound} that if $r$ is sufficiently large then \begin{equation}
\label{eqn:csboundagain}
|cs(r)| \le r^{1-\gamma}\E(r).
\end{equation}
If \eqref{eqn:csboundagain} holds for all $r\ge r_j$, define $\overline{r}=r_j$. Otherwise define $\overline{r}$ to be the supremum of the set of $r$ for which \eqref{eqn:csboundagain} does not hold.

{\em Step 2.\/}
We now show that
\begin{equation}
\label{eqn:step3}
\limsup_{r\ge \bar{r}}
\E(r)\le v(\bar{r})g(\bar{r}),
\end{equation}
where
\begin{equation}
\label{eqn:g}
g(r)= \exp\left(
\frac{r^{-\gamma}+2\gamma\kappa r^{-1}}{\gamma\left(1-r^{-\gamma}-2\kappa r^{-1}\right)}
\right),
\end{equation}
and $\kappa$ is the constant in \eqref{eqn:emubound}. Here and below we assume that $j$ is sufficiently large so that $1-r_j^{-\gamma}-2\kappa r_j^{-1}>0$.

To prove \eqref{eqn:step3}, assume that $r\ge \bar{r}$.
Then
by \eqref{eqn:vr}, \eqref{eqn:csboundagain}, and \eqref{eqn:emubound}, as in \eqref{eqn:EF1}, we have
\begin{equation}
\label{eqn:energy}
\E(r)\le \frac{1}{1-r^{-\gamma}-2\kappa r^{-1}} v(r).
\end{equation}
Also $v(r)> 0$, since $r\ge 1$.  By \eqref{eqn:vcs}, \eqref{eqn:csboundagain}, \eqref{eqn:emubound} and \eqref{eqn:energy} we have
\[
  \frac{dv(r)}{dr}\le (r^{-1-\gamma}+2\kappa r^{-2})\E(r) \le \frac{r^{-1-\gamma}+2\kappa r^{-2}}{1-r^{-\gamma}-2\kappa r^{-1}}v(r) \le \frac{r^{-1-\gamma}+2\kappa r^{-2}}{1-\bar{r}^{-\gamma} - 2\kappa\bar{r}^{-1}}v(r).
\]
Dividing this inequality by $v(r)$ and integrating from $\bar{r}$ to $r$ gives
\begin{align*}
\ln\left(\frac{v(r)}{v(\bar{r})}\right)
&\le
\frac{\bar{r}^{-\gamma}+2\gamma\kappa \bar{r}^{-1}-r^{-\gamma}-2\gamma\kappa r^{-1}}{\gamma\left(1-\bar{r}^{-\gamma}-2\kappa\bar{r}^{-1}\right)}
\\
&< \frac{\bar{r}^{-\gamma}+2\gamma\kappa\bar{r}^{-1}}{\gamma\left(1 -\bar{r}^{-\gamma}-2\kappa\bar{r}^{-1}\right)}.
\end{align*}
Therefore
\[
 v(r) < v(\bar{r})g(\bar{r}).
\]
Together with \eqref{eqn:energy}, this proves \eqref{eqn:step3}.

{\em Step 3.} We claim now that
\begin{equation}
\label{eqn:step4}
 v(\bar{r})
\le \frac{1}{2}r_j\vol+C_0\bar{r}^{1-\delta}.
\end{equation}
Here and below, $C_0,C_1,C_2\ldots$ denote positive constants which do not depend on $\hat{\sigma}$ or $r$, and which we do not need to know anything more about.

To prove \eqref{eqn:step4}, use  
\eqref{eqn:vcs}, \eqref{eqn:jcs}, \eqref{eqn:emubound}, and Lemma~\ref{lem:energybound} to obtain
\[
\frac{dv}{dr}\le\frac{4\pi^2(j+Kr^{2-\delta})}{r^2}+C_1 r^{-1}.
\]
Integrating this inequality from $r_j$ to $\bar{r}$ and using $j>0$, we deduce that
\begin{equation}
\label{eqn:3}
\begin{split}
 v(\bar{r})-v(r_j)&\le \frac{4\pi^2j}{r_j} - \frac{4\pi^2j}{\bar{r}} + \frac{4\pi^2K(\bar{r}^{1-\delta}-r_j^{1-\delta})}{1-\delta}+C_1(\ln\bar{r}-\ln r_j)\\
& \le \frac{4\pi^2j}{r_j}+C_2\bar{r}^{1-\delta}.
\end{split}
\end{equation}
Also, by \eqref{eqn:vr}, \eqref{eqn:jcs}, \eqref{eqn:emubound}, and Lemma~\ref{lem:energybound}, we have
\begin{equation}
\begin{split}
\label{eqn:4}
 v(r_j)
&\le \frac{1}{2}r_j\vol+C+\frac{4\pi^2(-j+Kr_j^{2-\delta})+2\kappa(r_j
\vol/2+C)}{r_j}\\
&\le\frac{1}{2}r_j\vol-\frac{4\pi^2j}{r_j}+C_3r_j^{1-\delta}.
\end{split}
\end{equation}
Adding \eqref{eqn:3} and \eqref{eqn:4} gives \eqref{eqn:step4}.

{\em Step 4.} We claim now that if $j$ is sufficiently large then
\begin{equation}
\label{eqn:step5}
 \bar{r} \le C_4r_j^{\frac{1}{1-2\gamma}}.
\end{equation}

To prove this, by the definition of $\bar{r}$, if $\bar{r}>r_j$ then there exists a number $r$ slightly smaller than $\bar{r}$ such that $|cs(r)|>r^{1-\gamma}\E(r)$. It then follows from Lemma~\ref{lem:elb} that
\[
r^{1-\gamma}\E(r)< cr^{2/3} \E(r)^{4/3}.
\]
Therefore
\[
 r^{2-4\gamma}\le c^3r^{1-\gamma}\E(r)\le c^3|cs(r)|.
\]
By \eqref{eqn:jcs} and the definition of $r_j$ in \eqref{eqn:rj}, we have
\[
c^3|cs(r)|\le C_5r_j^2+C_6r^{2-\delta}.
\]
Combining the above two inequalities and using the fact that $r$ can be arbitrarily close to $\bar{r}$, we obtain
\[
\bar{r}^{2-4\gamma} \le C_5r_j^2+C_6\bar{r}^{2-\delta}.
\]
Since $\delta>4\gamma$ and $\bar{r}>r_j\to\infty$ as $j\to\infty$, if $j$ is sufficiently large then
\[
C_6\bar{r}^{2-\delta} \le \frac{1}{2}\bar{r}^{2-4\gamma}.
\]
Combining the above two inequalities proves \eqref{eqn:step5}.

Assume henceforth that $j$ is sufficiently large so that \eqref{eqn:step5} holds.

{\em Step 5.} We claim now that
\begin{align}
\label{eqn:step6}
 c_{\sigma}(Y,\lambda)\le \frac{1}{4\pi}r_j\vol g(\bar{r})+C_7r_j^{1-\nu},
\end{align}
where $\nu= 1-\frac{1-\delta}{1-2\gamma}>0$.

To prove \eqref{eqn:step6}, insert \eqref{eqn:step5} into \eqref{eqn:step4} to obtain
\[
 v(\bar{r})\le \frac{1}{2}r_j\vol+C_8r_j^{1-\nu}.
\]
The above inequality and \eqref{eqn:step3} imply that
\[
\begin{split}
\limsup_{r\to\infty}\E(r)&\le \left(\frac{1}{2}r_j\vol + C_8r_j^{1-\nu}\right)g(\bar{r})\\
&\le\frac{1}{2} r_j\vol g(\bar{r})+C_9r_j^{1-\nu}.
\end{split}
\]
It follows from this and Proposition~\ref{prop:energyaction} that \eqref{eqn:step6} holds.

{\em Step 6.}  We now complete the proof of Proposition~\ref{prop:up} by applying \eqref{eqn:step6} to the sequence $\{\sigma_k\}$ and taking the limit as $k\to\infty$.

Let $j_k=I(\sigma_k)$ and $\bar{r}_k=\bar{r}_{\hat{\sigma}_k}$. It then follows from~\eqref{eqn:step6} and the definition of the numbers $r_{j_k}$ in \eqref{eqn:rj} that for every $k$ sufficiently large,
\begin{align}
\label{eqn:final}
\frac{c_{\sigma_k}(Y,\lambda)^2}{I(\sigma_k)}
&\le
\frac{(16\pi^2)^{-1}r_{j_k}^2\vol^2g(\bar{r}_k)^2 + C_{10}r_{j_k}^{2-\nu}}{(16\pi^2)^{-1}r_{j_k}^2\vol-r_{j_k}^{2-\delta}}
\\
\nonumber
&=
\frac{\vol g(\bar{r}_k)^2+C_{11}r_{j_k}^{-\nu}}{1-C_{12}r_{j_k}^{-\delta}}.
\end{align}
By hypothesis, as $k\to\infty$ we have $j_k\to\infty$, and hence $\bar{r}_k>r_{j_k}\to\infty$. It then follows from \eqref{eqn:g} that $\lim_{k\to\infty}g(\bar{r}_k)= 1$. Putting all this into the above inequality proves \eqref{eqn:up}.
\end{proof}

\section{The lower bound}
\label{sec:lb}

In this last section we prove the following proposition, which is the lower bound half of Theorem~\ref{thm:main}:

\begin{proposition}
\label{prop:lb}
Under the assumptions of Theorem~\ref{thm:main},
\begin{equation}
\label{eqn:lb}
\liminf_{k\to\infty} \frac{c_{\sigma_k}(Y,\lambda)^2}{I(\sigma_k)}\ge\vol.
\end{equation}
\end{proposition}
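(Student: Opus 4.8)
\textbf{Plan for the proof of Proposition~\ref{prop:lb}.}

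The plan is to deduce the lower bound from the upper bound already proved, together with the monotonicity of the invariants $c_\sigma$ under exact symplectic cobordisms and a direct computation on a model domain. The strategy rests on the following standard packaging principle in this circle of ideas: if one can embed a ball (or other well-understood domain) of known volume into a neighborhood of $Y$ built from $\lambda$, then the ECH cobordism maps relate the filtered ECH of $Y$ to that of the boundary of the model, and volume is monotone. Concretely, first I would reduce, exactly as in \S\ref{sec:ub}, to the case where $\lambda$ is nondegenerate, using the monotonicity in \cite[Lem.\ 4.2]{qech} and continuity of both sides under $C^0$-scaling of the contact form; the argument is the same as the one given just after the statement of Proposition~\ref{prop:up}, with $\limsup$ replaced by $\liminf$ and the inequalities reversed.

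The heart of the argument is a ``volume-filling'' comparison. The idea is that inside $Y$ one can find a disjoint union of standard contact pieces --- most simply, a large collection of small Darboux-type balls, or a single rescaled copy of the boundary of an ellipsoid --- whose total volume approaches $\op{vol}(Y,\lambda)$; equivalently, one considers the negative symplectization $(-\infty,0]\times Y$ with $d(e^s\lambda)$ and embeds into it a disjoint union of standard ellipsoids $E(a_1,\ldots)$. Because ECH cobordism maps (constructed in \cite{cc2} using Seiberg-Witten theory, as noted at the end of the introduction) are functorial and respect the symplectic action filtration, one obtains, for each class $\sigma_k$, an induced nonzero class on the ECH of the model boundary whose grading is controlled by $I(\sigma_k)$ and whose $c$-invariant is bounded above by $c_{\sigma_k}(Y,\lambda)$. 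On the model --- a disjoint union of ellipsoids or a single ellipsoid --- the relevant asymptotics are known explicitly: for $\partial E(a,b)$, the ECH spectrum satisfies $c_k^2/k \to 2ab = 2\op{vol}$, as verified in \cite[\S8]{qech}. Letting the total volume of the model tend to $\op{vol}(Y,\lambda)$ and combining with this explicit computation yields the inequality $\liminf_k c_{\sigma_k}(Y,\lambda)^2/I(\sigma_k) \ge \op{vol}(Y,\lambda)$.

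There are two technical points that need care. The first is \emph{grading bookkeeping}: under an exact cobordism the ECH cobordism map need not be grading-preserving on the nose, and one must check that the grading of the image class on the model grows proportionally to $I(\sigma_k)$ with the right constant, so that the $c^2/\text{grading}$ ratio on the model is genuinely comparable to the ratio on $Y$; this is where one uses that $c_1(\xi)+2\op{PD}(\Gamma)$ is torsion to get an honest $\Z$-grading and tracks the index under the relevant holomorphic curve count. The second is ensuring that the model's contribution can be made to \emph{exhaust} the volume: one needs a family of symplectic embeddings of disjoint ellipsoids (or balls) into the symplectization of $(Y,\lambda)$ whose volumes sum to $\op{vol}(Y,\lambda) - \epsilon$ for every $\epsilon > 0$, which follows from a Darboux-chart packing argument --- fill $Y$ by small contact balls, thicken into the symplectization --- but must be stated carefully so that the cobordism between the model boundary and $Y$ is genuinely an exact (weak) symplectic cobordism to which \cite{cc2} applies.

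\textbf{Main obstacle.} The step I expect to be hardest is the second technical point made precise: producing, for the \emph{arbitrary} nondegenerate contact three-manifold $(Y,\lambda)$, a family of exact symplectic cobordisms from a disjoint union of standard ellipsoids (of total volume approaching $\op{vol}(Y,\lambda)$) to $(Y,\lambda)$, and then chasing a fixed nonzero homogeneous ECH class through the induced cobordism maps while keeping simultaneous control of its symplectic action (bounded by $c_{\sigma_k}(Y,\lambda)$) and its grading (comparable to $I(\sigma_k)$). Equivalently, one must know that the ECH cobordism maps send the distinguished sequence of classes on $Y$ to classes on the model that are non-trivial and of the expected grading --- a subtle point, since cobordism maps can kill classes. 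Handling this will likely require using the $U$-map compatibility of cobordism maps and the explicit structure of $ECH$ of ellipsoids to force non-vanishing of the image classes.
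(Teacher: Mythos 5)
Your plan follows the same overall route as the paper: fill a collar $[-a,0]\times Y$ of the negative symplectization with disjoint symplectically embedded balls whose total volume comes within $\epsilon$ of $\frac{1-e^{-a}}{2}\op{vol}(Y,\lambda)$, regard the complement as a weakly exact symplectic cobordism from $(Y,\lambda)$ to $(Y,e^{-a}\lambda)\sqcup\bigsqcup_i\partial B(r_i)$, push the classes $\sigma_k$ through the induced ECH cobordism map, invoke monotonicity of $c_\sigma$, and quote the explicit asymptotics $c_{\zeta_k}(\partial B(r))^2/k\to 2r^2$. However, the step you flag as the ``main obstacle''---guaranteeing that the image classes on the model are nonzero with enough structure to extract the sum of the ball capacities---is the actual heart of the proof, and your plan does not contain the two ideas that resolve it. First, the grading bookkeeping you worry about is sidestepped entirely: since $U$ is an isomorphism on $ECH_*(Y,\xi,\Gamma)$ in all sufficiently large gradings (by the isomorphism with Seiberg--Witten Floer cohomology and \cite[Lemmas 22.3.3, 33.3.9]{km}), one reduces to sequences satisfying $U\sigma_{k+1}=\sigma_k$, after which no grading of any image class needs to be tracked; only the non-vanishing $U^{k-1}\sigma_k=\sigma_1\neq 0$ is used. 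Second, the cobordism map $\Phi$ is not merely shown to be nonzero but is computed exactly: writing $\Phi(\sigma)=\sum\sigma_{k_1,\ldots,k_N}\otimes\zeta_{k_1}\otimes\cdots\otimes\zeta_{k_N}$, one composes $X$ with the disjoint union of a trivial cobordism and the balls $B(r_i)$ themselves (recovering a trivial cobordism of $Y$, whose map is the identity) to pin down $\sigma_{0,\ldots,0}=\sigma$, and then induction on $k_1+\cdots+k_N$ using the intertwining relation $\Phi(U_0\sigma)=U_i\Phi(\sigma)$ forces $\sigma_{k_1,\ldots,k_N}=U_0^{k_1+\cdots+k_N}\sigma$.

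Only with this explicit formula, together with the expression for $c_\sigma$ of a disjoint union as a maximum over splittings of sums of the factors' $c$-invariants, does one obtain the crucial inequality
\[
c_{\sigma_k}(Y,\lambda)\;\ge\;\max_{k_1+\cdots+k_N=k-1}\sum_{i=1}^N c_{\zeta_{k_i}}(\partial B(r_i)),
\]
from which the volume bound follows by the elementary asymptotic calculation. Your proposal gestures at ``$U$-map compatibility and the explicit structure of $ECH$ of ellipsoids'' as the likely tools, which is the right instinct, but as written the non-vanishing of \emph{every} component $U_0^{k-1}\sigma_k\otimes\zeta_{k_1}\otimes\cdots\otimes\zeta_{k_N}$ with $k_1+\cdots+k_N=k-1$ is asserted rather than proved, and this is a genuine gap: cobordism maps can in general kill classes, so some version of the capping-plus-induction computation above is indispensable.
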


In \S\ref{sec:cob} we review some aspects of ECH cobordism maps, and in \S\ref{sec:provelb} we use these to prove Proposition~\ref{prop:lb}.

\subsection{ECH cobordism maps}
\label{sec:cob}

Let $(Y_+,\lambda_+)$ and $(Y_-,\lambda_-)$ be closed oriented three-manifolds, not necessarily connected, with nondegenerate contact forms. A {\em strong symplectic cobordism\/} ``from'' $(Y_+,\lambda_+)$ ``to'' $(Y_-,\lambda_-)$ is a compact symplectic four-manifold $(X,\omega)$ with boundary $\partial X = Y_+ - Y_-$ such that $\omega|_{Y_\pm} = d\lambda_\pm$. Following \cite{qech}, define a ``weakly exact symplectic cobordism'' from $(Y_+,\lambda_+)$ to $(Y_-,\lambda_-)$ to be a strong symplectic cobordism as above such that $\omega$ is exact.

It is shown in \cite[Thm.\ 2.3]{qech}, by a slight modification of \cite[Thm.\ 1.9]{cc2}, that a weakly exact symplectic cobordism as above induces a map
\[
\Phi^L(X,\omega): ECH^L(Y_+,\lambda_+,0) \longrightarrow ECH^L(Y_-,\lambda_-,0)
\]
for each $L\in\R$, defined by counting solutions to the Seiberg-Witten equations, perturbed using $\omega$, on a ``completion'' of $X$.

More generally, let $A\in H_2(X,\partial X)$, and write $\partial A = \Gamma_+ - \Gamma_-$ where $\Gamma_\pm\in H_1(Y_\pm)$. In our weakly exact symplectic cobordism, suppose that $\omega$ has a primitive on $X$ which agrees with $\lambda_\pm$ on each component of $Y_\pm$ for which the corresponding component of $\Gamma_\pm$ is nonzero.
Then the same argument constructs a map
\begin{equation}
\label{eqn:moregeneral}
\Phi^L(X,\omega,A): ECH^L(Y_+,\lambda_+,\Gamma_+) \longrightarrow ECH^L(Y_-,\lambda_-,\Gamma_-),
\end{equation}
defined by counting solutions to the Seiberg-Witten equations in the spin-c structure corresponding to $A$.
 As in \cite[Thm.\ 2.3(a)]{qech}, there is a well-defined direct limit map
\begin{equation}
\label{eqn:Phi}
\Phi(X,\omega,A) = \lim_{L\to\infty}\Phi^L(X,\omega,A): ECH(Y_+,\xi_+,\Gamma_+) \longrightarrow ECH(Y_-,\xi_-,\Gamma_-),
\end{equation}
where $\xi_\pm=\Ker(\lambda_\pm)$. 

The relevance of the map \eqref{eqn:Phi} for Proposition~\ref{prop:lb}
is that given a class $\sigma_+\in ECH(Y_+,\xi_+,\Gamma_+)$, if
$\sigma_-=\Phi(X,\omega,A)\sigma_+$, then
\begin{equation}
\label{eqn:monotonicity}
c_{\sigma_+}(Y_+,\lambda_+) \ge c_{\sigma_-}(Y_-,\lambda_-).
\end{equation}
The inequality \eqref{eqn:monotonicity} follows directly from \eqref{eqn:Phi} and the definition of $c_{\sigma_\pm}$ in \S\ref{sec:ECH}, cf.\ \cite[Lem.\ 4.2]{qech}.  Here we interpret $c_\sigma=-\infty$ if $\sigma=0$.  By a limiting argument as in \cite[Prop.\ 3.6]{qech}, the inequality \eqref{eqn:monotonicity} also holds if the contact forms $\lambda_\pm$ are allowed to be degenerate.

The map \eqref{eqn:moregeneral} is
a special case of the construction in \cite{field} of maps on ECH
induced by general strong symplectic cobordisms. Without the
assumption on the primitive of $\omega$, these maps can shift the
symplectic action filtration, but the limiting map \eqref{eqn:Phi} is
still defined.

For computations we will need four properties of the map \eqref{eqn:Phi}. First, if $X=([a,b]\times Y,d(e^s\lambda))$ is a trivial cobordism from $(Y,e^b\lambda)$ to $(Y,e^a\lambda)$, where $s$ denotes the $[a,b]$ coordinate, then
\begin{equation}
\label{eqn:trivcob}
\Phi(X,\omega,[a,b]\times\Gamma)=\op{id}_{ECH(Y,\xi,\Gamma)}.
\end{equation}
This follows for example from \cite[Cor.\ 5.8]{cc2}.

Second, suppose that $(X,\omega)$ is the composition of strong symplectic cobordisms $(X_+,\omega_+)$ from $(Y_+,\lambda_+)$ to $(Y_0,\lambda_0)$ and $(X_-,\omega_-)$ from $(Y_0,\lambda_0)$ to $(Y_-,\lambda_-)$. Let $\Gamma_0\in H_1(Y_0)$ and let $A_\pm\in H_2(X_\pm,\partial_\pm X_\pm)$ be classes with $\partial A_+=\Gamma_+-\Gamma_0$ and $\partial A_-=\Gamma_0-\Gamma_-$.
Then
\begin{equation}
\label{eqn:composition}
\Phi(X_-,\omega_-,A_-)\circ\Phi(X_+,\omega_+,A_+) = \sum_{A|_{X_\pm}=A_\pm} \Phi(X,\omega,A).
\end{equation}
This is proved the same way as the composition property in \cite[Thm.\ 1.9]{cc2}.

Third, if $X$ is connected and $Y_\pm$ are both nonempty, then
\begin{equation}
\label{eqn:Ukey}
\Phi(X,\omega,A)\circ U_+ = U_-\circ\Phi(X,\omega,A),
\end{equation}
where $U_\pm$ can be the $U$ map associated to any of the components of $Y_\pm$. This is proved as in \cite[Thm.\ 2.3(d)]{qech}.

Fourth, since we are using coefficients in the field $\Z/2$, it follows from the definitions that the ECH of a disjoint union is given by the tensor product
\begin{equation}
\label{eqn:du3}
ECH((Y,\xi)\sqcup(Y',\xi'),\Gamma\oplus\Gamma') = ECH(Y,\xi,\Gamma)\tensor ECH(Y',\xi',\Gamma').
\end{equation}
If $(X,\omega)$ is a strong symplectic cobordism from $(Y_+,\lambda_+)$ to $(Y_-,\lambda_-)$, and if $(X',\omega')$ is a strong symplectic cobordism from $(Y_+',\lambda_+')$ to $(Y_-',\lambda_-')$, then it follows from the construction of the cobordism map that the disjoint union of the cobordisms induces the tensor product of the cobordism maps:
\begin{equation}
\label{eqn:du4}
\Phi((X,\omega)\sqcup(X',\omega'),A\oplus A') = \Phi(X,\omega,A)\tensor\Phi(X',\omega',A').
\end{equation}

\subsection{Proof of the lower bound}
\label{sec:provelb}

\begin{proof}[Proof of Proposition~\ref{prop:lb}.] The proof has four steps.

{\em Step 1.\/} We can assume without loss of generality that
\begin{equation}
\label{eqn:wlog}
U\sigma_{k+1}=\sigma_k
\end{equation}
for each $k\ge 1$. To see this, note that
  by the isomorphism \eqref{eqn:taubes} of ECH with Seiberg-Witten Floer
  cohomology, together with properties of the latter proved in
  \cite[Lemmas 22.3.3, 33.3.9]{km}, we know that if the grading $*$ is
  sufficiently large, then $ECH_*(Y,\xi,\Gamma)$ is finitely
  generated and
\[
U:ECH_*(Y,\xi,\Gamma)\longrightarrow
  ECH_{*-2}(Y,\xi,\Gamma)
\]
is an isomorphism.  Hence there is a finite collection of sequences
satisfying \eqref{eqn:wlog} such that every nonzero homogeneous class
in $ECH(Y,\xi,\Gamma)$ of sufficiently large grading is contained in
one of these sequences (recall that we are using $\Z/2$ coefficients).
Thus it is enough to prove \eqref{eqn:lb} for a sequence satisfying
\eqref{eqn:wlog}.  Furthermore, in this case \eqref{eqn:lb} is
equivalent to
\begin{equation}
\label{eqn:lowerbound2}
\liminf_{k\to\infty}\frac{c_{\sigma_k}(Y,\lambda)^2}{k} \ge
2\op{vol}(Y,\lambda).
\end{equation}

{\em Step 2.\/} When $(Y,\lambda)$ is the boundary of a Liouville
domain, the lower bound \eqref{eqn:lowerbound2} was proved for a
particular sequence $\{\sigma_k\}$ satisfying \eqref{eqn:wlog} in
\cite[Prop.\ 8.6(a)]{qech}.  We now set up a modified version of this
argument.

  Fix $a>0$ and consider the symplectic manifold
\[
([-a,0]\times
  Y,\omega=d(e^s\lambda))
\]
where $s$ denotes the $[-a,0]$
  coordinate.  The idea is that if $a$ is large, then $([-a,0]\times Y,\omega)$
  is ``almost'' a Liouville domain whose boundary is $(Y,\lambda)$.

Fix $\epsilon>0$.  We adopt the notation that if $r>0$, then
  $B(r)$ denotes the closed ball
 \[
B(r)=\{z\in\C^2\mid \pi|z|^2\le r\},
\]
with the restriction of the standard symplectic form on $\C^2$.
Choose disjoint symplectic embeddings
  \[
\{\varphi_i:B(r_i)\to [-a,0]\times Y\}_{i=1,\ldots,N}
\]
such that $([-a,0]\times Y)\setminus\sqcup_i\varphi_i(B(r_i))$ has symplectic
volume less than $\epsilon$. One can find such embeddings using a covering of $[-a,0]\times Y$ by Darboux charts. Let
\[
X=([-a,0]\times Y)\setminus\bigsqcup_{i=1}^N\op{int}(\varphi_i(B(r_i))).
\]
 Then
  $(X,\omega)$ is a weakly exact symplectic cobordism from
  $(Y,\lambda)$ to $(Y,e^{-a}\lambda)\sqcup\bigsqcup_{i=1}^N\partial
  B(r_i)$. Here we can take the contact form on $\partial B(r_i)$ to be the restriction of the $1$-form $\frac{1}{2}\sum_{k=1}^2(x_kdy_k-y_kdx_k)$ on $\R^4$; we omit this from the notation. Note that there is a canonical isomorphism
\[
H_2(X,\partial X) = H_1(Y).
\]

The symplectic form $\omega$ on $X$ has a primitive $e^s\lambda$ which restricts to the contact forms on the convex boundary $(Y,\lambda)$ and on the component $(Y,e^{-a}\lambda)$ of the concave boundary. Hence, as explained in \S\ref{sec:cob}, we have a well-defined map
\begin{equation}
\label{eqn:PhiX}
\Phi = \Phi(X,\omega,\Gamma): ECH(Y,\xi,\Gamma)\longrightarrow
ECH\left((Y,\xi)\sqcup \bigsqcup_{i=1}^N\partial
  B(r_i),(\Gamma,0,\ldots,0)\right)
\end{equation}
which satisfies \eqref{eqn:monotonicity}.
By \eqref{eqn:du3}, the
target of this map is
\[
ECH\left((Y,\xi)\sqcup \bigsqcup_{i=1}^N\partial
  B(r_i),(\Gamma,0,\ldots,0)\right) =
ECH(Y,\xi,\Gamma)\tensor\bigotimes_{i=1}^N ECH(\partial B(r_i)).
\]
Let $U_0$
denote the $U$ map on the left hand side associated to the component $Y$, and let $U_i$ denote the $U$ map on the left hand side associated to the component $\partial B(r_i)$.
Note that $U_0$ or $U_i$ acts on the right hand side as the tensor product of the $U$ map on the appropriate factor with the identity on the other factors.
By \eqref{eqn:Ukey} we have
\begin{equation}
\label{eqn:U}
\Phi(U_0\sigma) = U_i\Phi(\sigma)
\end{equation}
for all $\sigma\in ECH(Y,\xi,\Gamma)$ and for all $i=0,\ldots,N$.

{\em Step 3.\/} 
 We now give an explicit formula for the cobordism map
$\Phi$ in \eqref{eqn:PhiX}.

Recall that $ECH(\partial B(r_i))$ has a basis $\{\zeta_k\}_{k\ge 0}$ where $\zeta_0=[\emptyset]$ and $U_i\zeta_{k+1}=\zeta_k$.  This follows either from the computation of the Seiberg-Witten Floer homology of $S^3$ in \cite{km}, or from direct calculations in ECH, see \cite[\S4.1]{bn}. We can now state the
formula for $\Phi$:

\begin{lemma}
\label{lem:Phi}
For any class $\sigma\in ECH(Y,\xi,\Gamma)$, we have
\[
\Phi(\sigma) = \sum_{k\ge
  0}\sum_{k_1+\ldots+k_N=k}U_0^k\sigma\tensor\zeta_{k_1}\tensor\cdots\tensor
\zeta_{k_N}.
\]
\end{lemma}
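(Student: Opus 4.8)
The plan is to compute $\Phi$ by first identifying its value on the contact invariant $[\emptyset]$, then propagating this to all of $ECH(Y,\xi,\Gamma)$ using the $U$-map intertwining relation \eqref{eqn:U}. First I would argue that $\Phi([\emptyset]) = \sum_{k\ge 0}\sum_{k_1+\cdots+k_N=k} [\emptyset]\tensor\zeta_{k_1}\tensor\cdots\tensor\zeta_{k_N}$. For this, observe that $X$ is obtained from the trivial cobordism $[-a,0]\times Y$ by removing balls, so $X$ is the composition of $[-a,0]\times Y$ (whose cobordism map is the identity by \eqref{eqn:trivcob}) with a cobordism $X'$ from $Y\sqcup\bigsqcup_i\partial B(r_i')$-type data, or more directly: the ball removals contribute exactly the ``weak exact cobordism from $\partial B(r)$-filling'' computation. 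The cleanest route is to invoke the known calculation, essentially \cite[Prop.\ 8.6(a)]{qech} or the analogous computation of the cobordism map for a ball complement, which says that removing a ball $B(r_i)$ sends $[\emptyset]\mapsto\sum_{k_i\ge 0}[\emptyset]\tensor\zeta_{k_i}$; then \eqref{eqn:du4} and \eqref{eqn:composition} assemble the $N$ factors into the stated double sum. Since $\Phi$ preserves the contact invariant (the empty set of Reeb orbits maps to the empty set plus correction terms from the new boundary components), this pins down $\Phi([\emptyset])$.

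Next I would use \eqref{eqn:U}: applying $U_0^k$ and observing that $U_0^k[\emptyset]$ ranges over a spanning set is not quite enough since $U_0$ need not be surjective onto all of $ECH(Y,\xi,\Gamma)$ from $[\emptyset]$. Instead, the key point is that \eqref{eqn:U} says $\Phi(U_0\sigma)=U_i\Phi(\sigma)$ for \emph{every} $i$, including $i\ge 1$. So fix an arbitrary $\sigma$ and write $\Phi(\sigma) = \sum_{\mathbf{k}} \tau_{\mathbf{k}}\tensor\zeta_{k_1}\tensor\cdots\tensor\zeta_{k_N}$ for some classes $\tau_{\mathbf{k}}\in ECH(Y,\xi,\Gamma)$ indexed by $\mathbf{k}=(k_1,\dots,k_N)$, using the basis $\{\zeta_k\}$ of each $ECH(\partial B(r_i))$ together with \eqref{eqn:du3}. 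Now apply \eqref{eqn:U} with various $i\ge 1$: since $U_i$ acts as $\op{id}\tensor\cdots\tensor(U\text{ on }i\text{th factor})\tensor\cdots$ and $U\zeta_{k_i}=\zeta_{k_i-1}$ (with $U\zeta_0=0$), the relation $\Phi(U_0\sigma)=U_i\Phi(\sigma)$ forces recursions among the $\tau_{\mathbf{k}}$. Comparing the $i$th index for all $i$ shows $\tau_{\mathbf{k}}$ depends only on $k=k_1+\cdots+k_N$, say $\tau_{\mathbf{k}}=\tau_k$, and that $\tau_{k-1}$ (appearing after applying any single $U_i$) equals the corresponding coefficient of $\Phi(U_0\sigma)$, giving $\tau_k = U_0^k\tau_0$ inductively once we know $\tau_0 = \Phi(\sigma)$'s $\zeta_0^{\tensor N}$-coefficient equals $\sigma$.

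The remaining input is that the $\zeta_0\tensor\cdots\tensor\zeta_0$-component of $\Phi(\sigma)$ equals $\sigma$ itself. This should follow because $X$ is topologically the trivial cobordism with balls removed, and the ``leading order'' part of the cobordism map — the part that does not interact with the new small boundary spheres — is the identity; concretely, one can realize this via the composition \eqref{eqn:composition} with the trivial cobordism \eqref{eqn:trivcob}, or by a direct filtered count at action level just above $\A$ of the relevant generators where the holomorphic curves in the completion that contribute to the $\zeta_0^{\tensor N}$ coefficient are exactly trivial cylinders over $Y$ together with the caps. Alternatively, for $\sigma=[\emptyset]$ we already know it from the first paragraph, and then \eqref{eqn:U} bootstraps to all $\sigma$ of the form $U_0^m[\emptyset]$; to reach a general $\sigma$ one uses that $\Phi$ is a module map over the $U$-action and that, in sufficiently high grading, $ECH_*(Y,\xi,\Gamma)$ is generated over $\Z/2[U_0]$ in a controlled way — but cleanest is simply to prove the $\zeta_0^{\tensor N}$-coefficient statement directly from the cobordism map construction, which is where I expect the main technical work to lie. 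Combining the identification $\tau_0=\sigma$ with the recursion $\tau_k=U_0^k\sigma$ and expanding $\sum_k \tau_k\tensor(\sum_{k_1+\cdots+k_N=k}\zeta_{k_1}\tensor\cdots\tensor\zeta_{k_N})$ yields exactly the claimed formula.
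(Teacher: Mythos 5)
Your main argument (the second and third paragraphs) is essentially the paper's own proof: expand $\Phi(\sigma)$ in the basis $\{\zeta_{k_1}\otimes\cdots\otimes\zeta_{k_N}\}$, induct on $k=k_1+\cdots+k_N$ using the intertwining relation \eqref{eqn:U} with $i\ge 1$, and pin down the $\zeta_0^{\otimes N}$-coefficient by capping off the balls and composing with the trivial cobordism via \eqref{eqn:composition} and \eqref{eqn:trivcob} --- the paper completes this last step, which you correctly flag as the crux, with the observation that $\Phi_{B(r_i)}$ sends $\zeta_0$ to $1$ and $\zeta_m$ to $0$ for $m>0$ by a dimension count. One correction to your discarded first paragraph: the formula claimed there for $\Phi([\emptyset])$ is wrong, since the coefficient of $\zeta_{k_1}\otimes\cdots\otimes\zeta_{k_N}$ must be $U_0^k[\emptyset]$ rather than $[\emptyset]$ (these differ in grading), but as you abandon that route it does not affect the argument.
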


Note that the sum on the right is finite because the map $U_0$
decreases symplectic action.

\begin{proof}[Proof of Lemma~\ref{lem:Phi}.]
Given $\sigma$, we can expand $\Phi(\sigma)$ as
\begin{equation}
\label{eqn:expand}
\Phi(\sigma) = \sum_{k_1,\ldots,k_N\ge
  0}\sigma_{k_1,\ldots,k_N}\tensor\zeta_{k_1}\tensor\cdots\tensor \zeta_{k_N}
\end{equation}
where $\sigma_{k_1,\ldots,k_N}\in ECH(Y,\xi,\Gamma)$.  We need to show
that
\begin{equation}
\label{eqn:c}
\sigma_{k_1,\ldots,k_N}=U_0^{k_1+\cdots+k_N}\sigma.
\end{equation}
We will prove by induction on $k=k_1+\cdots+k_N$ that equation
\eqref{eqn:c} holds for all $\sigma$.

To prove \eqref{eqn:c} when $k=0$, let $X'$ denote the disjoint union of the trivial cobordism $([-a-1,-a]\times Y,d(e^s\lambda))$ and the balls $B(r_i)$.
Then the composition $X'\circ X$ is the trivial cobordism $([-a-1,0]\times Y,d(e^s\lambda))$
from $(Y,e^\lambda)$ to $(Y,e^{-a-1}\lambda)$.  Now each ball $B(r_i)$ induces a cobordism map 
\[
\Phi_{B(r_i)}:ECH(\partial B(r_i))\longrightarrow\Z/2
\]
as in \eqref{eqn:Phi}. By \eqref{eqn:du4} and \eqref{eqn:trivcob} we have
\[
\Phi(X',\Gamma) = \op{id}_{ECH(Y,\xi,\Gamma)}\tensor \Phi_{B(r_1)}\tensor\cdots\tensor\Phi_{B(r_N)}.
\]
It then follows from \eqref{eqn:trivcob} and the composition property \eqref{eqn:composition} that
\[
\begin{split}
\sigma & = (\Phi(X',\Gamma)\circ\Phi)(\sigma)\\
&= \sum_{k_1,\ldots,k_N\ge
  0} \sigma_{k_1,\ldots,k_N} \prod_{i=1}^N\Phi_{B(r_i)}(\zeta_{k_i}).
\end{split}
\]
Now $\Phi_{B(r_i)}$ sends $\zeta_0$ to $1$ by \cite[Thm.\ 2.3(b)]{qech}, and $\zeta_m$ to $0$ for all $m>0$ by grading considerations (the corresponding moduli space of Seiberg-Witten solutions in the completed cobordism has dimension $2m$).  Therefore $\sigma=\sigma_{0,\ldots,0}$ as desired.

Next let $k>0$ and suppose that \eqref{eqn:c} holds for smaller values
of $k$.  To prove \eqref{eqn:c}, we can assume without loss of
generality that $k_1>0$.  Applying $U_1$ to equation
\eqref{eqn:expand} and then using equation \eqref{eqn:U} with $i=1$, we
obtain
\[
\sigma_{k_1,\ldots,k_N} = (U_0\sigma)_{k_1-1,k_2,\ldots,k_N}.
\]
By inductive hypothesis,
\[
(U_0\sigma)_{k_1-1,k_2,\ldots,k_N} = U_0^{k-1}(U_0\sigma).
\]
The above two equations imply \eqref{eqn:c}, completing the proof
of Lemma~\ref{lem:Phi}.
\end{proof}

{\em Step 4.\/} We now complete the proof of
Proposition~\ref{prop:lb}.  Let $\{\sigma_k\}_{k\ge 1}$ be a
sequence in $ECH(Y,\xi,\Gamma)$ satisfying \eqref{eqn:wlog}.  By
\eqref{eqn:monotonicity} we have
\[
c_{\sigma_k}(Y,\lambda) \ge
c_{\Phi(\sigma_k)}\left((Y,e^{-a}\lambda)\sqcup\bigsqcup_{i=1}^N
  \partial B(r_i)\right).
\]
By Lemma~\ref{lem:Phi} and \cite[Eq.\ (5.6)]{qech}, we have
\begin{gather*}
c_{\Phi(\sigma_k)}\left((Y,e^{-a}\lambda)\sqcup\bigsqcup_{i=1}^N
  \partial B(r_i)\right)
=\quad\quad\quad\quad\quad\quad\quad\quad\quad\quad\quad\quad\quad\quad\\
\quad\quad\quad\quad\quad\max_{U^{k'}\sigma_k\neq
  0}\max_{k_1+\cdots+k_N=k'}\left(c_{U_0^{k'}\sigma_k}(Y,e^{-a}\lambda)
  + \sum_{i=1}^Nc_{\zeta_{k_i}}(\partial B(r_i))\right).
\end{gather*}
Since $U^{k-1}\sigma_k=\sigma_1\neq 0$, it follows from the above equation and inequality that
\begin{equation}
\label{eqn:vol1}
c_{\sigma_k}(Y,\lambda)\ge \max_{k_1+\cdots+k_N=k-1}\sum_{i=1}^N
c_{\zeta_{k_i}}(\partial B(r_i)).
\end{equation}

Now recall from \cite{qech} that Theorem~\ref{thm:main} holds for $\partial B(r)$. In detail, we know from \cite[Cor.\ 1.3]{qech} that
\[
c_{\zeta_k}(\partial B(r))=dr
\]
where $d$ is the unique nonnegative integer such that
\[
\frac{d^2+d}{2} \le k \le \frac{d^2+3d}{2}.
\]
Consequently,
\begin{equation}
\label{eqn:vol2}
\lim_{k\to\infty}\frac{c_{\zeta_k}(\partial B(r))^2}{k} = 2r^2= 4\op{vol}(B(r)).
\end{equation}

It follows from \eqref{eqn:vol1} and \eqref{eqn:vol2} and the elementary calculation in \cite[Prop.\
8.4]{qech} that
\begin{equation}
\label{eqn:vol3}
\liminf_{k\to\infty}\frac{c_{\sigma_k}(Y,\lambda)^2}{k} \ge
4\sum_{i=1}^N\op{vol}(B(r_i)).
\end{equation}
By the construction in Step 2,
\begin{equation}
\label{eqn:vol4}
\begin{split}
\sum_{i=1}^N\op{vol}(B(r_i)) & \ge \op{vol}([-a,0]\times
Y,d(e^s\lambda)) - \epsilon\\
&= \frac{1-e^{-a}}{2}\op{vol}(Y,\lambda) - \epsilon.
\end{split}
\end{equation}
Since $a>0$ can be arbitrarily large and $\epsilon>0$ can be arbitrarily small, \eqref{eqn:vol3} and
\eqref{eqn:vol4} imply \eqref{eqn:lowerbound2}. This completes the
proof of Proposition~\ref{prop:lb}.
\end{proof}


\begin{thebibliography}{99}

\bibitem{ch} D. Cristofaro-Gardiner and M. Hutchings, {\em From one Reeb orbit to two\/}, arXiv:1202.4839.

\bibitem{ir} M. Hutchings, {\em The embedded contact homology index
    revisited\/}, New perspectives and challenges in symplectic field
  theory, 263--297, CRM Proc. Lecture Notes 49, Amer. Math. Soc.,
  2009.

\bibitem{tw} M. Hutchings, {\em Taubes's proof of the Weinstein
    conjecture in dimension three\/}, Bull.\ AMS {\bf 47\/} (2010), 73--125.

\bibitem{qech} M. Hutchings, {\em Quantitative embedded contact
    homology\/}, J.\ Diff.\ Geom. {\bf 88\/} (2011), 231--266.

\bibitem{pnas} M. Hutchings, {\em Recent progress on symplectic embedding problems in four dimensions\/}, Proc. Natl. Acad. Sci. USA {\bf 108} (2011), 8093--8099.

\bibitem{bn} M. Hutchings, {\em Lecture notes on embedded contact homology\/}, arXiv:1303.5789, to appear in proceedings of CAST summer school, Budapest, 2012.

\bibitem{field} M. Hutchings, {\em Embedded contact homology as a (symplectic) field theory\/}, in preparation.

\bibitem{obg1} M. Hutchings and C. H. Taubes, {\em Gluing
  pseudoholomorphic curves along branched covered cylinders I\/},
  J. Symplectic Geom. {\bf 5\/} (2007), 43--137.

\bibitem{obg2} M. Hutchings and C. H. Taubes, {\em Gluing
  pseudoholomorphic curves along branched covered cylinders II\/},
  J. Symplectic Geom. {\bf 7\/} (2009), 29--133.

\bibitem{cc2} M. Hutchings and C. H. Taubes, {\em Proof of the Arnold
    chord conjecture in three dimensions II\/}, Geometry and Topology {\bf 17\/} (2013), 2601--2688.

\bibitem{km} P.B. Kronheimer and T.S. Mrowka, {\em Monopoles and
    three-manifolds\/}, Cambridge University Press, 2008.

\bibitem{mcd} D. McDuff, {\em The Hofer conjecture on embedding symplectic ellipsoids\/}, J. Diff. Geom. {\bf 88} (2011), 519--532.

\bibitem{tw1} C. H. Taubes, {\em The Seiberg-Witten equations and the
    Weinstein conjecture\/}, Geom. Topol. {\bf 11\/} (2007),
  2117-2202.

\bibitem{tw2} C. H. Taubes, {\em The Seiberg-Witten equations and the Weinstein conjecture II: More closed integral curves for the Reeb vector field\/}, Geom. Topol. {\bf 13\/} (2009), 1337-1417.

\bibitem{e1} C. H. Taubes, {\em Embedded contact homology and
    Seiberg-Witten Floer cohomology I\/}, Geometry and Topology {\bf
    14\/} (2010), 2497--2581.

\bibitem{e2} C. H. Taubes, {\em Embedded contact homology and
    Seiberg-Witten Floer cohomology II\/}, Geometry and Topology {\bf
   14\/} (2010), 2583--2720.

\bibitem{e3} C. H. Taubes, {\em Embedded contact
  homology and Seiberg-Witten Floer cohomology III\/}, 
Geometry and Topology {\bf
    14\/} (2010), 2721--2817.

\bibitem{e5} C. H. Taubes, {\em Embedded contact homology and
    Seiberg-Witten Floer cohomology V\/}, 
Geometry and Topology {\bf
   14\/} (2010), 2961--3000.

\end{thebibliography}
\end{document}